\renewcommand{\d}{\partial}
\newcommand{\vepsilon}{\varepsilon}
\newcommand{\vphi}{\varphi}
\newcommand{\cE}{\mathcal{E}}
\newcommand{\cF}{\mathcal{F}}
\newcommand{\cM}{\mathcal{M}}
\newcommand{\supp}{\mbox{supp }}
\newcommand{\bC}{\mathbb{C}}
\newcommand{\vpi}{\varpi}
\newtheorem{thm}{Theorem}
\newtheorem{prop}[thm]{Proposition}
\newtheorem{lem}[thm]{Lemma}
\newtheorem{cor}[thm]{Corollary}
\theoremstyle{definition}
\newtheorem{remark}[thm]{Remark}
\numberwithin{thm}{section}
\numberwithin{equation}{section}
\renewcommand{\[}{\begin{equation}}
\renewcommand{\]}{\end{equation}}
\newcommand{\wed}{\wedge}
\title[H\"older continuous subsolution problem, II]{On the H\"older continuous subsolution problem for the complex Monge-Amp\`ere equation, II}
\author[N.-C Nguyen]{Ngoc Cuong Nguyen}
\address{Faculty of Mathematics and Computer Science, Jagiellonian University 30-348 Krak\'ow, \L ojasiewicza 6, Poland}
\email{Nguyen.Ngoc.Cuong@im.uj.edu.pl}
\address{Current address: Department of Mathematics and Center for Geometry and its Applications, Pohang University of Science and Technology, 37673, The Republic of Korea}
\email{cuongnn@postech.ac.kr}
\subjclass[2010]{53C55, 35J96, 32U40}
\keywords{Dirichlet problem, weak solutions, H\"older continuous, Monge-Amp\`ere, subsolution problem}
\begin{document}

\maketitle


\begin{abstract} We solve the Dirichlet problem for the complex Monge-Amp\`ere equation on a strictly pseudoconvex with the right hand side being a positive Borel measure which is dominated by the Monge-Amp\`ere measure of a H\"older continuous plurisubharmonic function. If the boundary data is continuous, then the solution is continuous. If the boundary is H\"older continuous, then the solution is also H\"older continuous. In particular, the answer to a question of A. Zeriahi \cite{DGZ16} is always affirmative.\end{abstract}

\section{Introduction}
In this paper we continue our investigation in \cite{Ng17a} of the Dirichlet problem for the complex Monge-Amp\`ere equation in a bounded strictly pseudoconvex domain $\Omega \subset \bC^n$, provided the existence of a H\"older continuous subsolution. We refer the reader to \cite{BT76},  \cite{GKZ08}, and \cite{ko98} for a more detailed historical account on the subject (see also \cite{DDGKPZ14}, \cite{DGZ16}, \cite{DN16}, \cite{DMN17} \cite{ko08}, \cite{hiep} and \cite{viet16} for geometric motivations and applications).

Let $\vphi \in PSH(\Omega) \cap C^{0,\alpha} (\bar\Omega)$ for some $0<\alpha \leq 1$. Assume also that 
\[\notag
 	\vphi = 0 \quad \mbox{on } \d\Omega.
\]
We consider the following set 
\[\notag
	\cM (\vphi, \Omega):= \left\{
\mu \mbox{ is positive Borel measure: } \mu \leq (dd^c\vphi)^n 
	\mbox{ in } \Omega \right\}.\]
We also say that $\vphi$ is  a H\"older continuous subsolution to measures in $\cM(\vphi,\Omega)$. Given $\psi$ a H\"older continuous function on the boundary $\d\Omega$ and a measure $\mu$ in $\cM(\vphi, \Omega)$ we look for a real-valued function $u$ satisfying
\[\label{eq:dirichlet-prob}
\begin{aligned}
&	u\in PSH\cap L^\infty(\Omega),  \\
&	(dd^c u)^n = \mu \quad \mbox{in } \Omega, \\
&	\lim_{z\to x} u(z) = \psi(x) \quad \mbox{for } x\in\d\Omega,
\end{aligned}\]
and 
\[\label{eq:holder}
	u \in C^{0,\alpha'}(\bar\Omega) \quad \mbox{for some } 0< \alpha' \leq 1. \\
\]
The  Dirichlet problem~\eqref{eq:dirichlet-prob} was solved by Ko\l odziej \cite{ko95}  provided that there exists a  bounded plurisubharmonic subsolution. In our setting, the H\"older continuity of $\psi$ on $\d\Omega$ and of $\vphi$ on $\bar\Omega$ are necessary in order to solve the Dirichlet problem \eqref{eq:dirichlet-prob} $\&$ \eqref{eq:holder}. In \cite{Ng17a} this problem is solved under the extra assumptions:
\[\notag
	\psi \equiv 0 \quad \mbox{ and } \quad \int_\Omega (dd^c \vphi)^n <+\infty.
\]
This gave also an affirmative answer to a question of A. Zeriahi \cite[Question 17]{DGZ16} when the subsolution $\vphi$ has finite Monge-Amp\`ere total mass. Our goal now is to remove these extra assumptions. The first main result of this paper is as follows.

\medskip

\noindent{\bf Theorem~A. }{\em 
Let $\psi \in C^0(\d\Omega)$ and $\mu \in \cM(\vphi,\Omega)$. Then, there exists a unique solution $u\in C^0(\bar\Omega)$ to the   Dirichlet problem \eqref{eq:dirichlet-prob}.}
\medskip

This theorem is closely related to a question of S.\,Ko\l odziej \cite[Question 14]{DGZ16} where he asked if one could prove Theorem~A   when the subsolution $\vphi$ is only \em continuous? \rm The question is still open in general. 

The next result gives a necessary and sufficient condition under which a positive Borel measure admitting a H\"older continuous plurisubharmonic potential. In particular, the answer to the above question of A.\,Zeriahi is affirmative.

\medskip

\noindent{\bf Theorem B. } {\em Assume that $\psi$ is H\"older continuous and $\mu \in \cM(\vphi,\Omega)$. Then,  the Dirichlet problem \eqref{eq:dirichlet-prob} $\&$ \eqref{eq:holder} is solvable.}

\medskip

Thanks to this we obtain easily the convexity of the set of Monge-Amp\`ere measures of H\"older continuous plurisubharmonic functions in $\Omega$. Another important consequence is the $L^p$ property. Thus, results in \cite{BKPZ16}, \cite{Cha15a, Cha15b} are special cases of ours. 

\medskip

\noindent{\bf Corollary C.} {\em Let $\mu \in \cM(\vphi,\Omega)$ and $f\in L^p(\Omega, d\mu)$, $p>1$, a nonnegative function. Suppose that $\vphi$ is H\"older continuous plurisubharmonic function on a neighborhood of $\bar\Omega$. Then, $f\mu \in \cM(\tilde\vphi,\Omega)$ for a H\"older continuous plurisubharmonic function $\tilde\vphi$ in $\Omega.$
}

\bigskip
\noindent{\em Acknowledgement. } I am very grateful to S\l awomir Ko\l odziej for many useful discussions. I would like to thank Kang-Tae Kim for his generous support and encouragement. 
The author is supported by  the NRF Grant 2011-0030044 (SRC-GAIA) of The Republic of Korea.

\section{Preliminaries}

In this section we will recall results that are needed in the proofs of Theorems A and B, Corollary~C. If there is no other indication, then the notations in this section will be used for the rest of the paper.

Let $\Omega$ be a bounded strictly pseudoconvex domain in $\bC^n$. Let $\rho\in C^2(\bar\Omega)$ be a strictly plurisubharmonic defining function for $\Omega$. Namely, 
\[\label{eq:defining-fct}
	\Omega = \{\rho < 0\} \quad \mbox{and} \quad d\rho \neq 0 \mbox{ on } \d\Omega.
\]
Let us denote by $\beta = dd^c |z|^2$ the standard K\"ahler form in $\bC^n$. Without loss of generality we may assume that
\[
	dd^c\rho \geq \beta \quad \mbox{on } \bar\Omega.
\]
Throughout the paper the H\"older continuous subsolution $\vphi$ and the associated set of measures $\cM(\vphi, \Omega)$ are defined as in the introduction.  

The following estimate will be very useful for us. For simplicity we write
\[
	\|\cdot\|_\infty := \sup_{\Omega} |\cdot |.
\]
\begin{lem}[B\l ocki \cite{Bl93}] \label{lem:blocki}
Let $v_1,...,v_n, v, h \in PSH \cap L^\infty(\Omega)$ be such that $v_i \leq 0$ for $i =1, ...,n$, and $v\leq h$. Assume that $\lim_{z\to \d\Omega} [h(z) - v(z)] =0$. Then, for an integer $1\leq k \leq n$,
\[\begin{aligned}
&	\int_{\Omega} (h-v)^k dd^c v_1 \wed \cdots \wed dd^c v_n \\
&\leq k! \|v_1\|_\infty \cdots \|v_k\|_\infty \int_{\Omega} (dd^cv)^k \wed dd^cv_{k+1} \wed \cdots \wed dd^c v_n.
\end{aligned}\]
\end{lem}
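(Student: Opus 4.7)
The natural strategy is classical Bedford--Taylor integration by parts, iterated $k$ times so that each of the currents $dd^c v_1,\dots,dd^c v_k$ is successively ``traded in'' for a copy of $dd^c v$, at the cost of a factor $\|v_j\|_\infty$ and a drop of one in the exponent of $(h-v)$. Concretely, for $1\le j\le k$ set
\[\notag
A_j := \int_\Omega (h-v)^{k-j+1}\, dd^c v_j\wed\cdots\wed dd^c v_n \wed (dd^c v)^{j-1},
\]
so $A_1$ is the left-hand side and $A_{k+1}$ is the integral on the right. I would prove the one-step recursion $A_j \le (k-j+1)\,\|v_j\|_\infty\, A_{j+1}$, whose telescoping immediately gives the $k!$ in the final inequality.

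For a single step, write $m=k-j+1$, $S = dd^c v_{j+1}\wed\cdots\wed dd^c v_n \wed (dd^c v)^{j-1}$, so that $A_j = \int_\Omega (h-v)^m \, dd^c v_j \wed S$. The function $(h-v)^m$ is bounded, nonnegative, and tends to $0$ at $\d\Omega$, so Stokes' theorem in the sense of Bedford--Taylor yields
\[\notag
A_j = \int_\Omega v_j\, dd^c(h-v)^m \wed S.
\]
Now expand
\[\notag
dd^c(h-v)^m = m(m-1)(h-v)^{m-2}\, d(h-v)\wed d^c(h-v) + m(h-v)^{m-1}\, dd^c(h-v).
\]
The term $d(h-v)\wed d^c(h-v)\wed S$ is a positive measure, $(h-v)^{m-2}\ge 0$, and $v_j\le 0$, so that piece contributes nonpositively. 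Splitting $dd^c(h-v)=dd^c h-dd^c v$, the $dd^c h$ part is again nonpositive since $v_j\le 0$ and $dd^c h\wed S$ is positive. Only the $-dd^c v$ part survives with a useful sign, giving
\[\notag
A_j \le -m\int_\Omega v_j (h-v)^{m-1}\, dd^c v \wed S \le m\,\|v_j\|_\infty \int_\Omega (h-v)^{m-1}\, dd^c v \wed S = m\,\|v_j\|_\infty A_{j+1},
\]
which is the recursion. Iterating from $j=1$ to $j=k$ collects the constant $k(k-1)\cdots 1 = k!$ together with the product $\|v_1\|_\infty\cdots\|v_k\|_\infty$, and leaves the current $(dd^c v)^k \wed dd^c v_{k+1}\wed\cdots\wed dd^c v_n$, which is the desired bound.

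The only real subtlety is justifying the Stokes step, since $(h-v)^m$ is only bounded quasi-continuous rather than smooth. The standard remedy is approximation: replace $h$ and $v$ by decreasing sequences of smooth strictly psh functions on slightly smaller subdomains (e.g.\ by standard convolution), for which the formula is a direct application of classical Stokes, and then pass to the limit using the Bedford--Taylor continuity of mixed Monge--Amp\`ere operators along monotone sequences together with the fact that the boundary condition $\lim_{z\to\d\Omega}(h-v)(z)=0$ makes all boundary terms vanish uniformly. No step here is genuinely hard, but this regularization and passage to the limit is the point that must be handled with care; everything else is algebra driven by the signs $v_j\le 0$, $h-v\ge 0$, and the positivity of the remaining currents.
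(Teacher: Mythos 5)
The paper does not prove this lemma; it is stated and cited directly from B\l ocki \cite{Bl93}, so there is no in-paper proof to compare against. Your argument is essentially B\l ocki's own method: integrate by parts $k$ times to trade each $dd^c v_j$ for a copy of $dd^c v$, picking up a factor $\|v_j\|_\infty$ and dropping the exponent of $h-v$ by one at each step, discarding the two terms with the helpful sign that come from $v_j\le 0$, $h-v\ge 0$, $dd^ch\wed S\ge 0$, and the positivity of $d(h-v)\wed d^c(h-v)\wed S$. Your indexing of $A_j$, the recursion $A_j\le (k-j+1)\|v_j\|_\infty A_{j+1}$, and the telescoping that produces the $k!$ are all correct. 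The one point where ``regularize and pass to the limit'' is too quick is the boundary contribution in the Stokes step: the hypothesis $h-v\to 0$ on $\d\Omega$ kills $\int_{\d\Omega}(h-v)^m d^c v_j\wed S$, but the second boundary term $\int_{\d\Omega}v_j\, d^c(h-v)^m\wed S$ does not visibly vanish when $m=1$, since $v_j$ and $d^c(h-v)$ need not vanish there. The cleanest fix is to first prove the recursion with $h$ replaced by $h_\vepsilon:=\max\{h-\vepsilon, v\}$, so that $h_\vepsilon-v$ is compactly supported in $\Omega$ and the Bedford--Taylor integration by parts applies with no boundary terms at all, and then let $\vepsilon\downarrow 0$ using monotone convergence of the mixed Monge--Amp\`ere products together with dominated convergence for $(h_\vepsilon-v)^k\nearrow(h-v)^k$. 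With that standard emendation your proof is complete and coincides with the classical one.
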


Consider also the following  Cegrell class: 
\[ \cE_0 = \left\{ 
v \in PSH \cap L^\infty(\Omega) \biggm|
\begin{aligned}& \lim_{x\to z} v(x) =0 \quad  \forall z\in \d\Omega,  \\
&\mbox{ and } \int_\Omega (dd^c v)^n < +\infty \end{aligned}\right\}.
\]
The Cegrell inequaliy in this class reads:
\begin{lem}[Cegrell \cite{Ce04}] \label{lem:cegrell}
Let $v_1,..., v_n \in \cE_0$. Then, 
\[
	\int_\Omega dd^c v_1 \wed \cdots \wed dd^c v_n \leq 
	\left(\int_\Omega (dd^cv_1)^n\right)^\frac{1}{n} \cdots 
	\left(\int_\Omega (dd^cv_n)^n\right)^\frac{1}{n}.
\]
\end{lem}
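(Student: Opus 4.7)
\medskip

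\textbf{Proof plan.} My plan is to reduce to smooth functions by approximation and then to iterate a two-function Cauchy-Schwarz bound for mixed Monge-Amp\`ere integrals.

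First, I would approximate each $v_i \in \cE_0$ from above by a decreasing sequence of smooth plurisubharmonic functions that vanish on $\d\Omega$, and invoke the Bedford-Taylor convergence theorem for mixed Monge-Amp\`ere operators along monotone sequences. Both sides of the desired inequality pass to the limit, so it suffices to prove it in the smooth case where classical integration by parts is available.

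Second, the core estimate is the two-function Cauchy-Schwarz bound: for $u,w \in \cE_0$ and any positive closed $(n-2,n-2)$-current $S = dd^c w_3 \wed \cdots \wed dd^c w_n$ with $w_j \in \cE_0$,
\[\notag
\int_\Omega dd^c u \wed dd^c w \wed S \ \leq \
\Bigl(\int_\Omega (dd^c u)^2 \wed S\Bigr)^{1/2}
\Bigl(\int_\Omega (dd^c w)^2 \wed S\Bigr)^{1/2}.
\]
I would prove this by integrating by parts to move one derivative off each Monge-Amp\`ere factor, producing a symmetric positive semidefinite bilinear form of the shape $(u,w) \mapsto \int du \wed d^c w \wed S$, and then applying the classical Cauchy-Schwarz inequality. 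The boundary contributions vanish because $u$ and $w$ vanish on $\d\Omega$ and $S$ is closed.

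Third, I would iterate the above step. A first application with $S = dd^c v_3 \wed \cdots \wed dd^c v_n$ gives
\[\notag
J(v_1,v_2,v_3,\ldots,v_n) \ \leq \ J(v_1,v_1,v_3,\ldots,v_n)^{1/2}\, J(v_2,v_2,v_3,\ldots,v_n)^{1/2},
\]
where $J(\cdot)$ denotes the corresponding $n$-fold mixed integral. By iterating the same estimate on each of the remaining mismatched slots and tracking the exponents, one arrives at
\[\notag
J(v_1,\ldots,v_n) \ \leq \ \prod_{i=1}^n J(v_i,\ldots,v_i)^{1/n}.
\]

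I expect the main obstacle to be the bookkeeping in the iteration: the repeated Cauchy-Schwarz applications have to be arranged so that the exponents on each diagonal integral $J(v_i,\ldots,v_i) = \int_\Omega (dd^c v_i)^n$ are exactly balanced at $1/n$. A clean way to handle this is to prove, by induction on the number of distinct functions, a symmetric intermediate statement in which the exponents are equal, or equivalently, to raise both sides to a suitable dyadic power and induct on $n$. The other technical care required concerns justifying integration by parts at the boundary inside $\cE_0$, but this is handled by the smooth approximation of step one.
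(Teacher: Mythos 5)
Your outline (smooth approximation, a two–function Cauchy--Schwarz estimate, iteration to the geometric mean) does match the general strategy behind Cegrell's inequality, which the paper merely cites from \cite{Ce04} without proof; the iteration bookkeeping is standard and works. However, the justification you give for the core two--function estimate has a genuine gap. The bilinear form you propose, $(u,w)\mapsto\int_\Omega du\wedge d^c w\wedge S$ with $S=dd^c w_3\wedge\cdots\wedge dd^c w_n$, is not well defined: $du\wedge d^c w\wedge S$ has bidegree $(n-1,n-1)$, not $(n,n)$, so it cannot be integrated over $\Omega$. A single integration by parts on $\int_\Omega dd^c u\wedge dd^c w\wedge S$ gives $\int_{\d\Omega} d^c u\wedge dd^c w\wedge S$ (the bulk term vanishes because $dd^c w\wedge S$ is closed), so the mass integral is genuinely a boundary quantity, not a bulk Dirichlet form of the type you describe.

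Even after repairing the degree (say by wedging with an extra positive $(1,1)$-form, or by inserting a weight $-h$ for some $h\in\cE_0$), the form one obtains has diagonal entries of energy type, $\int (-h)(dd^c u)^2\wedge S$ or $\int du\wedge d^c u\wedge S\wedge\beta$, not the pure masses $\int (dd^cu)^2\wedge S$ appearing on the right-hand side of your claimed estimate; so a direct Cauchy--Schwarz yields a different inequality. There is also a positivity issue you do not address: to run the classical Cauchy--Schwarz argument one needs the quadratic form $t\mapsto\int (dd^c(u+tw))^2\wedge S$ to be nonnegative for all real $t$, but $u+tw$ is not plurisubharmonic for $t<0$, so $(dd^c(u+tw))^2\wedge S$ need not be a positive measure. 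The mixed-mass Cauchy--Schwarz estimate is indeed true and is the engine of Cegrell's proof, but its proof is more delicate than a direct application of Cauchy--Schwarz to a bilinear Dirichlet form; see Cegrell, \emph{Pluricomplex energy} (Acta Math.\ 1998), Theorem~2.1, or the treatment in \cite{Ce04}.
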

We need also to work with a subclass of the Cegrell class:
\[
	\cE_0' := \left\{v\in \cE_0: \int_\Omega (dd^cv)^n \leq 1\right\}.
\]

The decay of the volume of sublevel sets of functions in the class $\cE_0'$ is equivalent to the volume-capacity inequality.  This inequality plays a crucial role in the capacity method due to Ko\l odziej to obtain the \em a priori and stability estimates \rm for weak solutions of complex Monge-Amp\`ere equation. Here the capacity is the Bedford-Taylor capacity and it is defined as follows. For a Borel set $E\subset \Omega$
\[
	cap(E,\Omega):= \sup\left\{\int_E (dd^cw)^n: w\in PSH(\Omega), \; 0\leq w\leq 1\right\}.
\]
In what follows we shall write $cap(E)$ instead of $cap(E,\Omega)$ for simplicity as the domain $\Omega$ is already fixed.

\section{Proof of Theorem A}

In this section we shall prove the following result. 

\begin{prop}\label{prop:vol-cap} Assume that $\mu \in \cM(\vphi, \Omega)$. Then, there exist uniform constants $\alpha_0,C>0$ depending only on $\vphi, \Omega$ such that for every compact set  $K\subset\Omega$, 
\[
	\mu(K) \leq C cap(K) \exp \left(\frac{-\alpha_0}{[cap(K)]^\frac{1}{n}}\right).
\]
\end{prop}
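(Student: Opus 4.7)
The plan is to reduce to the finite Monge-Amp\`ere mass case, which was handled in the author's earlier work \cite{Ng17a}, via a truncation of the subsolution.

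Since $\mu\le(dd^c\vphi)^n$, it suffices to prove the estimate with $\mu$ replaced by $(dd^c\vphi)^n$. Fix a compact $K\subset\Omega$ and choose a relatively compact strictly pseudoconvex subdomain $\Omega_0\Subset\Omega$ containing $K$, with a smooth strictly plurisubharmonic defining function $\rho_0$ for $\Omega_0$. I would then construct a truncated subsolution: for $A>0$ sufficiently large, set
\[\notag
\tilde\vphi := \max(\vphi,\, A\rho_0).
\]
Because $\rho_0=0$ on $\d\Omega_0$ while $\vphi<0$ on $\bar\Omega_0$ by the maximum principle, $\tilde\vphi=A\rho_0$ in a collar neighborhood of $\d\Omega_0$; and because $\rho_0$ is uniformly negative on $K$ while $\vphi\ge -\|\vphi\|_\infty$, for $A$ large enough one has $\tilde\vphi=\vphi$ on an open neighborhood of $K$. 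Thus $\tilde\vphi$ is plurisubharmonic, H\"older continuous of the same exponent $\alpha$ (with seminorm controlled by $\vphi$ and $\rho_0$), vanishes on $\d\Omega_0$, and has finite total Monge-Amp\`ere mass on $\Omega_0$ since it is smooth in a collar neighborhood of $\d\Omega_0$.

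By the locality of the Monge-Amp\`ere operator on the open set where the two functions agree (Bedford-Taylor), $(dd^c\tilde\vphi)^n(K)=(dd^c\vphi)^n(K)$. Applying the finite-mass volume-capacity estimate from \cite{Ng17a} to $\tilde\vphi$ on $\Omega_0$ yields
\[\notag
(dd^c\vphi)^n(K)\le C_0\, cap(K,\Omega_0)\exp\!\left(\frac{-\alpha_0}{cap(K,\Omega_0)^{1/n}}\right).
\]
Since $\Omega_0\subset\Omega$, one has $cap(K,\Omega)\le cap(K,\Omega_0)$, and the two capacities can be compared up to a factor depending only on the geometry of $\Omega_0$ relative to $\Omega$. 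After this comparison, one recovers the required estimate with $cap(K,\Omega)$ on the right-hand side.

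The main obstacle is the uniformity of the constants $C_0$ and $\alpha_0$: they must depend only on the H\"older data of $\vphi$ and on geometric invariants of $\Omega$, not on the choice of auxiliary subdomain $\Omega_0$ nor on the (bounded but growing) Monge-Amp\`ere mass $\int_{\Omega_0}(dd^c\tilde\vphi)^n$ of the truncation. This forces a careful reexamination of the proof in \cite{Ng17a}, separating the dependence on the H\"older seminorm of the subsolution from the spurious dependence on its total Monge-Amp\`ere mass, so that the truncation can be taken adaptively for each $K$ without spoiling the final constants.
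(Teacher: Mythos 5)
Your proposal correctly identifies the difficulty but does not overcome it, and it contains a factual error about what \cite{Ng17a} proves. Let me be specific about the gaps.

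First, the reduction to the finite-mass case via $\tilde\vphi=\max(\vphi,A\rho_0)$ does not produce uniform constants, and you concede as much in your final paragraph. To have $\tilde\vphi=\vphi$ near $K$ you need $A\gtrsim\|\vphi\|_\infty/\inf_K|\rho_0|$, so both $\|\tilde\vphi\|_{C^{0,\alpha}}$ and $\int_{\Omega_0}(dd^c\tilde\vphi)^n$ scale like powers of $A$ and blow up as $K$ approaches $\d\Omega_0$. The constants coming out of \cite{Ng17a} degenerate accordingly, so the output of the argument depends on $K$. You cannot cure this by pushing $\Omega_0$ towards $\Omega$, because then $A$ is again forced to blow up. This tension is exactly what a proof must resolve, and "careful reexamination of \cite{Ng17a}" is not a resolution — once one reexamines and reorganizes that proof so that the dependence on total mass disappears, there is no longer any reason to truncate at all.

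Second, the estimate you invoke from \cite{Ng17a} is stated incorrectly: as the present paper explicitly remarks, the finite-mass result in \cite{Ng17a} gives only $(dd^c\vphi)^n(K)\le C\exp(-\alpha_0/cap(K)^{1/n})$ \emph{without} the $cap(K)$ prefactor. So even if your reduction produced uniform constants, it would yield a strictly weaker inequality than Proposition~\ref{prop:vol-cap} asserts — and the prefactor is precisely what is needed later for the $[cap(K)]^{1+\tau}$ bound in Corollary~\ref{cor:vol-cap}. Third, the capacity comparison runs the wrong way: for $K\subset\Omega_0\subset\Omega$ one has $cap(K,\Omega)\le cap(K,\Omega_0)$, and since $x\mapsto x\exp(-\alpha_0/x^{1/n})$ is increasing, a bound in terms of $cap(K,\Omega_0)$ is \emph{weaker} than one in terms of $cap(K,\Omega)$; converting it requires an upper bound $cap(K,\Omega_0)\le C\,cap(K,\Omega)$ with $C$ uniform in $K$, which fails when $\Omega_0$ depends on $K$.

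The paper's actual proof does not truncate. It proves directly, for $v\in\cE_0'$, a polynomial decay $(dd^c\vphi)^n(v<-s)\lesssim s^{-n}$ (Lemma~\ref{lem:cegrell-sublevel}, using only B\l ocki's estimate applied to $(v_{s/2}-v)^n$, so finiteness of $\int_\Omega(dd^c\vphi)^n$ is never used) and an exponential decay $(dd^c\vphi)^n(v<-s)\lesssim e^{-\tau s}$ (Lemma~\ref{lem:exp-decay}, via smoothing of the H\"older subsolution, integration by parts, and the Cegrell and B\l ocki inequalities). Multiplying the two decay rates yields $\nu(v<-s)\lesssim s^{-n}e^{-\alpha_1 s}$, and applying this to $v=h_K^*/cap(K)^{1/n}$ gives the stated inequality with the $cap(K)$ prefactor built in. In short: the direct route bypasses the uniformity problem entirely, rather than trying to patch it after the fact.
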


Notice that under the assumption 
$
	\int_\Omega (dd^c\vphi)^n < +\infty
$
a similar inequality, without the factor $cap(K)$ on the right hand side, was proven in \cite{Ng17a}.

\begin{remark} Theorem~A will follows immediately from the proposition and a result of Ko\l odziej \cite[Theorem~5.9]{ko05} as $\mu$ belongs to the class $\cF(A, h)$ with $h = e^{\alpha_0 x}$ and a uniform $A>0$.
\end{remark}

We  will need the following two lemmas. The first one tells us how fast the Monge-Amp\`ere mass of $(dd^c\vphi)^n$ on large sublevel sets goes to infinity.

\begin{lem}\label{lem:cegrell-sublevel}
Let $v\in \cE_0'$. Then, there exists a uniform constant $C$ such that for $s>0$,
\[
	\int_{\{v<-s\}} (dd^c \vphi)^n \leq  \frac{C\|\vphi\|_\infty^n}{s^n}.
\]
\end{lem}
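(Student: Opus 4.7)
The plan is to use a Chebyshev-type trick to reduce the sublevel set integral to a weighted integral against $(dd^c\vphi)^n$ over all of $\Omega$, then apply B\l ocki's inequality (Lemma~\ref{lem:blocki}) to bound that weighted integral in terms of $\|\vphi\|_\infty$ and the total Monge-Amp\`ere mass of $v$.

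First I would observe that on the sublevel set $\{v<-s\}$ we have $(-v/s)^n \geq 1$, so
\[
\int_{\{v<-s\}} (dd^c\vphi)^n \leq \frac{1}{s^n}\int_\Omega (-v)^n (dd^c\vphi)^n.
\]
Thus the problem reduces to proving the global bound $\int_\Omega (-v)^n (dd^c\vphi)^n \leq C\|\vphi\|_\infty^n$, with a constant independent of $v \in \cE_0'$.

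Next I would apply Lemma~\ref{lem:blocki} with the choices $h \equiv 0$, $v = v$, $v_1 = \cdots = v_n = \vphi$, and $k = n$. One needs to verify the hypotheses: $\vphi \leq 0$ on $\bar\Omega$ (by the maximum principle, since $\vphi$ is plurisubharmonic with $\vphi = 0$ on $\d\Omega$); $v \leq 0 = h$ (same reason, since $v \in \cE_0$); and $\lim_{z\to\d\Omega}(h-v)(z) = \lim_{z\to\d\Omega}(-v)(z) = 0$ because $v \in \cE_0$. The lemma then yields
\[
\int_\Omega (-v)^n (dd^c\vphi)^n \leq n!\,\|\vphi\|_\infty^n \int_\Omega (dd^c v)^n \leq n!\,\|\vphi\|_\infty^n,
\]
where the last inequality uses the defining condition $\int_\Omega (dd^c v)^n \leq 1$ of the class $\cE_0'$. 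Combining this with the Chebyshev step gives the stated inequality with $C = n!$.

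There is no serious obstacle here: once one spots the right application of Lemma~\ref{lem:blocki} (putting $\vphi$ in all of the $v_i$ slots so that $(dd^c\vphi)^n$ appears on the left, while the power $(-v)^n$ on the left becomes an $(dd^c v)^n$ on the right), the argument is immediate. The only thing worth checking is that each hypothesis of B\l ocki's inequality is met, which follows directly from membership in $\cE_0'$ and from $\vphi = 0$ on $\d\Omega$. In particular, the estimate is sharp in its dependence on $\|\vphi\|_\infty$ and on the total mass of $v$, which is what will be needed later to feed into the capacity-volume comparison in Proposition~\ref{prop:vol-cap}.
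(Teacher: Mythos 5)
Your proof is correct and follows essentially the same strategy as the paper: a Chebyshev step to replace the sublevel-set integral by a weighted integral over $\Omega$, then B\l ocki's inequality with $v_1=\cdots=v_n=\vphi$ and $k=n$ to reduce to $\int_\Omega(dd^cv)^n\le 1$. The only difference is cosmetic: the paper takes $h=v_{s/2}=\max\{v,-s/2\}$ and the weight $(v_{s/2}-v)^n$, while you take $h\equiv 0$ and weight $(-v)^n$, which is a bit cleaner and gives the slightly sharper constant $n!$ in place of $2^n n!$.
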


\begin{proof} Set  $v_s := \max\{v, -s\}$. Then, $v_s = v$ on a neighborhood of $\d\Omega$. Moreover,
\[
	v_{s/2} - v \geq \frac{s}{2}  \quad \mbox{on }  \{v< -s\} \subset\subset \Omega.
\]
Therefore,
\[\label{eq:lcs}\begin{aligned}
	\int_{\{v<-s\}} (dd^c\vphi)^n 
&\leq  	\left(\frac{2}{s}\right)^n \int_{\Omega} (v_{\frac{s}{2}} -v)^n (dd^c \vphi)^n \\
&\leq		\frac{2^n n!}{s^n} \|\vphi\|_\infty^n \int_\Omega (dd^cv)^n,
\end{aligned}\]
where the second inequality follows from Lemma~\ref{lem:blocki}.
\end{proof}

On the other hand the volume, with respect to the measure $(dd^c\vphi)^n$, of small sublevel sets of functions in  $\cE_0'$ decays exponentially fast to zero. The H\"older continuity of $\vphi$ is crucially important to prove such an estimate.

\begin{lem} \label{lem:exp-decay}
There exist uniform constants $\tau>0$ and $C>0$ such that for $v\in \cE_0'$ and $s\geq 2$,
\[
	\int_{\{v<-s\}} (dd^c\vphi)^n \leq C e^{- \tau s}.
\]
\end{lem}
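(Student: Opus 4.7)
\medskip

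\noindent\emph{Proof plan.} My plan is to upgrade the polynomial decay of Lemma~\ref{lem:cegrell-sublevel} to exponential decay by replacing the polynomial test weight $(v_{s/2}-v)^n$ used there with an exponential weight of the form $e^{\tau(v_{s/2}-v)}-1$, and then reducing the estimate to a uniform bound on the moments of $v_{s/2}-v$ against $(dd^c\vphi)^n$.

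The starting point is the pointwise inequality $v_{s/2}-v\geq s/2$ on $\{v<-s\}$, which gives for any $\tau>0$
\[\notag
	(e^{\tau s/2}-1)\int_{\{v<-s\}}(dd^c\vphi)^n \leq \int_{\Omega}\bigl(e^{\tau(v_{s/2}-v)}-1\bigr)(dd^c\vphi)^n.
\]
The right-hand integrand vanishes on $\{v\geq -s/2\}$, so the integral is supported on the compact set $\{v\leq -s/2\}\subset\subset\Omega$ and is a priori finite. The goal is to bound the RHS by a universal constant $C'=C'(\vphi,\Omega)$ independent of $v\in\cE_0'$ and $s$; combined with $(e^{\tau s/2}-1)^{-1}\leq 2e^{-\tau s/2}$ for $s\geq 2$ and $\tau$ small, this would yield the lemma.

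Expanding the exponential as a Taylor series and exchanging sum and integral, such a $C'$ would follow from a uniform moment estimate
\[\notag
	I_k:=\int_{\Omega}(v_{s/2}-v)^k(dd^c\vphi)^n \leq C^k,\qquad k\geq 1,
\]
so that $\sum_{k\geq 1}(\tau C)^k/k!$ converges for $\tau<1/C$. For $k=n$ this is exactly what Lemma~\ref{lem:cegrell-sublevel}'s proof gives: $I_n\leq n!\,\|\vphi\|_\infty^n$ via Lemma~\ref{lem:blocki} applied with $v_1=\dots=v_n=\vphi$ and $h=v_{s/2}$. For $1\leq k<n$, Lemma~\ref{lem:blocki} reduces the problem to controlling the mixed mass $\int(dd^cv)^k\wed(dd^c\vphi)^{n-k}$; here the H\"older continuity of $\vphi$ would enter through an approximation $\vphi_\lambda=\max\{\vphi,\lambda\rho\}\in\cE_0$ combined with Cegrell's Lemma~\ref{lem:cegrell}, the pointwise estimate $|\vphi(z)|\leq K\,\mathrm{dist}(z,\d\Omega)^\alpha$ (coming from $\vphi\in C^{0,\alpha}(\bar\Omega)$ with $\vphi|_{\d\Omega}=0$) being what ensures the bound passes to the limit $\lambda\to\infty$ with constants depending only on $\vphi$ and $\Omega$.

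The hard part will be the high moments $I_k$ for $k>n$. Lemma~\ref{lem:blocki} does not extend naively beyond $k=n$, and $\|v\|_\infty$ is not uniformly bounded on $\cE_0'$ (for example, suitable truncations of $c\log|z|$ lie in $\cE_0'$ with arbitrarily large sup-norm), so one cannot simply dominate $(v_{s/2}-v)^{k-n}$ by a constant. The H\"older continuity of $\vphi$ must enter here in a more essential way -- perhaps via an iterated integration by parts trading powers of $dd^c\vphi$ for powers of $dd^cv$, or via a bootstrap that feeds the polynomial decay of Lemma~\ref{lem:cegrell-sublevel} on nested sublevel sets back into the exponential estimate -- and this is where I would expect the bulk of the technical work to lie.
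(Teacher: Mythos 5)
Your exponential-weight / Taylor-series strategy is a genuinely different route from the paper's, but it has a gap you have correctly located and that is not a mere technicality. The paper does \emph{not} try to control all the moments $I_k=\int_\Omega(v_{s/2}-v)^k(dd^c\vphi)^n$. Instead it notes $\int_{\{v<-s\}}(dd^c\vphi)^n\leq\int_\Omega(v_{s/2}-v)(dd^c\vphi)^n$ and then proves a single first-moment comparison, namely \eqref{eq:holder-cegrell-class} with $k=n$: $\int_\Omega(v_s-v)(dd^c\vphi)^n\leq C\bigl(\int_\Omega(v_s-v)\,dV_{2n}\bigr)^{\alpha_n}$. This is obtained by an induction on $k$ in $S_k=(dd^c\vphi)^k\wedge\beta^{n-k}$, using the convolution $\vphi*\chi_t$, the two estimates $\vphi*\chi_t-\vphi\leq Ct^\alpha$ and $|dd^c(\vphi*\chi_t)|\leq C\|\vphi\|_\infty t^{-2}$, integration by parts, and Lemmas~\ref{lem:blocki} and~\ref{lem:cegrell} applied only to powers of $dd^cv$ and $dd^c\rho$, never to $(dd^c\vphi)^n$ alone. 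The lemma then follows from the known exponential decay of $\int_\Omega(v_s-v)\,dV_{2n}$ (from \cite{Ng17a} and \cite[Lemma~4.1]{ko05}). In other words, the H\"older continuity of $\vphi$ is used to transfer a \emph{first}-moment estimate from $(dd^c\vphi)^n$ to $dV_{2n}$, where the exponential decay is already known; nothing about moments of order $k>1$ is needed.

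Your proposal's obstruction at $k>n$ is essentially the circularity you would face: for the model case $(dd^c\vphi)^n=dV_{2n}$ one computes $I_k\sim k!\,\tau^{-k}e^{-\tau s/2}$ where $\tau$ is the (unknown) decay rate, so proving $I_k\leq k!\,C^k$ uniformly in $v\in\cE_0'$ already encodes the exponential decay you are after. In addition, the intermediate range $1\leq k<n$ in your sketch is also problematic: reducing $I_k$ via Lemma~\ref{lem:blocki} to $\int(dd^cv)^k\wedge(dd^c\vphi)^{n-k}$ and then invoking Cegrell for $\vphi_\lambda=\max\{\vphi,\lambda\rho\}$ produces a factor $\bigl(\int_\Omega(dd^c\vphi_\lambda)^n\bigr)^{(n-k)/n}$, which need not stay bounded as $\lambda\to\infty$ since $\int_\Omega(dd^c\vphi)^n$ is allowed to be infinite in this paper (that is precisely the hypothesis being removed relative to \cite{Ng17a}). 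The H\"older bound $|\vphi(z)|\leq K\,\mathrm{dist}(z,\d\Omega)^\alpha$ does not control this total mass. The paper's induction sidesteps both issues by arranging that every Cegrell application sees only $(dd^cv)^n$ (with mass $\leq 1$) and $(dd^c\rho)^n$ (finite by $C^2$-smoothness), never the full $(dd^c\vphi)^n$.
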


\begin{proof} 
With the same notations as in the proof of Lemma~\ref{lem:cegrell-sublevel} we have for $s\geq 2$
\[ \label{eq:exp-decay-A}
\begin{aligned}
	\int_{\{v<-s\}} (dd^c\vphi)^n 
&\leq  	\frac{2}{s} \int_{\Omega} (v_{\frac{s}{2}} -v) (dd^c \vphi)^n\\
&\leq		\int_{\Omega} (v_{\frac{s}{2}} -v) (dd^c \vphi)^n.
\end{aligned}
\]
Let us denote
\[\notag
	S_k := (dd^c\vphi)^k \wed \beta^{n-k},
\]
where $\beta= dd^c |z|^2$ and $0 \leq k\leq n$ is integer. Our first goal is to show that there exist $\alpha_k>0$ and $C>0$ (independent of $v$ and $s$) such that for $v\in \cE_0'$ and $s\geq 1$,
\[\label{eq:holder-cegrell-class}
	\int_\Omega (v_s -v) S_k \leq C \left(\int_\Omega (v_s -v) dV_{2n}\right)^{\alpha_k},
\]
where $v_s = \max\{v, -s\}$.  Indeed, without loss of generality we may assume that
\[\label{eq:small-ass}
	0< \|v_s -v\|_1 < 1/100.
\]
Otherwise, if $\|v_s -v\|_1 =0$, then the inequality trivially holds. If $\|v_s -v\|_1 \geq 1/100$, then we have, using $s\geq 1$, $v\leq 0$ and Lemma~\ref{lem:blocki}, that
\[\begin{aligned}
	\int_\Omega (v_s -v) S_k 
&= 		\int_{\{v<-s\}} (-s-v) S_k \\
&\leq \int_{\Omega} (-v)^k S_k\\
&\leq		C \|\vphi\|_\infty^k.
\end{aligned}\]
This implies the inequality.

Next, under the assumption \eqref{eq:small-ass} we prove the inequality by induction in $k$. The case $k=0$ is obvious. Assume that 
for every integer $m\leq k$ we have 
\[
	\int_\Omega (v_s -v) S_m \leq C \left(\int_\Omega (v_s -v) dV_{2n}\right)^{\alpha_m}.
\]
Then, we need to show that there exists $0< \alpha_{k+1} \leq 1$ such that
\[
	\int_\Omega (v_s -v) S_{k+1} \leq C \left(\int_\Omega (v_s -v) dV_{2n}\right)^{\alpha_{k+1}}.
\]
For simplicity we write 
\[
	S:= (dd^c \vphi)^{k} \wed \beta^{n-k-1}.
\]
Let us still write $\vphi$ to be the H\"older continuous extension of $\vphi$ onto a neighbourhood $U$ of $\bar \Omega$.  Consider the convolution of $\vphi$ with the standard smooth kernel $\chi$, i.e.,
$\chi \in C^\infty_c(\bC^n)$ such that $\chi(z) \geq 0$, $\chi(z)= \chi(|z|)$, $\supp \chi \subset\subset B(0,1) $ and $\int_{\bC^n}\chi (z) dV_{2n} =1$.
Namely, for $z\in U$ and $\delta>0$ small,
\[\begin{aligned}
	\vphi * \chi_t (z) 
&=		 \int_{B(0,1)} \vphi(z-t z')  \chi (z') dV_{2n}(z')  \\
&=		\frac{1}{t^{2n}} \int_{B(z, t)} \vphi (z') \chi \left(\frac{z -z'}{t} \right) dV_{2n} (z').  
\end{aligned}\]
Observe that 
\[\label{eq:holder-convol}
\begin{aligned}
	\vphi * \chi_t (z) - \vphi(z) 
&=	 \int_{B(0,1)} [\vphi(z-t z') -\vphi(z)] \chi (z') dV_{2n}(z')  \\
&	\leq Ct^\alpha,
\end{aligned}\]
and
\[\label{eq:sec-der-convol}
	\left|\frac{\d^2 \vphi * \chi_t}{\d z_j \d \bar z_k} (z)\right| \leq \frac{C \|\vphi\|_\infty}{t^2}.
\]
We first have 
\[ \label{eq:i1+i2}
\begin{aligned}
	\int_\Omega (v_s-v) dd^c \vphi \wed S  
&\leq 	\left| \int_\Omega (v_s -v) dd^c \vphi*\chi_t \wed S \right| \\
&\quad + \left| \int_\Omega (v_s -v) dd^c (\vphi*\chi_t -\vphi) \wed  S\right| \\
&=: I_1+ I_2.
\end{aligned}\]
It follows from \eqref{eq:sec-der-convol} that
\[
	I_1 \leq \frac{C \|\vphi\|_\infty}{t^2} \int_\Omega (v_s -v) S \wed \beta = 
	\frac{C \|\vphi\|_\infty}{t^2} \int_\Omega (v_s -v) S_{k} .
\]
Hence,
\[
	I_1 \leq \frac{C \|\vphi\|_\infty}{t^2} \|v_s -v\|_1^{\alpha_k}.
\]
We turn to the estimate of the second integral $I_2$. By integration by parts
\[\label{eq:i2-int}\begin{aligned}
	\int_\Omega (v_s-v) dd^c (\vphi*\chi_t -\vphi) \wed S  
&= 		\int_{\Omega} (\vphi*\chi_t -\vphi) dd^c (v_s -v) \wed S \\
&= 		\int_{\{v < -\frac{s}{2}\}} (\vphi*\chi_t -\vphi) dd^c (v_s -v) \wed S
\end{aligned}\]
as $v_s = v$ on $ \{v \geq -s\}$. Hence,
\[\begin{aligned}
	I_2 
&\leq 	\int_{\{v<-\frac{s}{2}\}} |\vphi*\chi_t -\vphi| (dd^c v + dd^c v_s) \wed S\\
&\leq 	C t^\alpha  \int_{\{v<-\frac{s}{2}\}} (dd^c v + dd^c v_s) \wed S.
\end{aligned}\]
For the first term of the integral on the right hand side we have 
\[\label{eq:ind-i2-a1}\begin{aligned}
	\int_{\{v<-\frac{s}{2}\}} dd^c v \wed S 
&\leq		 \left(\frac{4}{s}\right)^k \int_{\{v<-\frac{s}{4}\}} (v_{\frac{s}{4}} -v)^k dd^c v \wed S \\
&\leq		\frac{C}{s^k} \int_\Omega (v_{\frac{s}{4}} -v)^k dd^c v \wed (dd^c \vphi)^k \wed \beta^{n-k-1}.
\end{aligned}\]
Applying Lemma~\ref{lem:blocki} we conclude that
\[\label{eq:ind-i2-a2}
	\int_{\Omega} (v_{s/4} -v)^k dd^cv \wed (dd^c\vphi)^k \wed \beta^{n-k-1} \leq C \|\vphi\|_\infty^k \int_\Omega (dd^cv)^{k+1} \wed \beta^{n-k-1}.
\]
Using $dd^c\rho \geq \beta$ (see Preliminaries) and Cegrell's inequality we get that
\[\label{eq:ind-i2-a3}\begin{aligned}
\int_\Omega (dd^cv)^{k+1} \wed \beta^{n-k-1}
&\leq 	\int_\Omega (dd^cv)^{k+1} \wed (dd^c\rho)^{n-k-1}  \\
&\leq		\left(\int_\Omega (dd^cv)^n\right)^\frac{k+1}{n} 
	\left(\int_\Omega (dd^c\rho)^n\right)^\frac{n-k-1}{n}.
\end{aligned}\]
Combining \eqref{eq:ind-i2-a1}, \eqref{eq:ind-i2-a2} and \eqref{eq:ind-i2-a3}  we have for $s\geq 1$,
\[
	\int_{\{v<-s/2\}} dd^c v \wed S \leq C \|\vphi\|_\infty^k.
\]
Notice that $v_s \in \cE_0'$.  The same arguments as above imply that for $s\geq 1$,
\[
	\int_{\{v<-s/2\}} dd^c v_s \wed S \leq C \|\vphi\|_\infty^k.
\]
Thus, altogether we have
\[
	I_1+ I_2 \leq \frac{C \|\vphi\|_\infty}{t^2} \|v_s-v\|_1^{\alpha_k} +  C\|\vphi\|_\infty^k t^{\alpha}.
\]
If we choose 
\[
	t = \|v_s -v\|_1^\frac{\alpha_k}{3}, \quad 
	\alpha_{k+1} = \frac{\alpha \alpha_k}{3},
\]
then the proof of \eqref{eq:holder-cegrell-class} is completed.

We now conclude the proof of the lemma. It follows from \cite[Eq. (2.26)]{Ng17a} and \cite[Lemma~4.1]{ko05} that 
$$
	\int_\Omega (v_s -v) dV_{2n} \leq C e^{-\tau_0 s},
$$
where $\tau_0>0$ and $C>0$ are uniform constants independent of $v$ and $s$. Combining this with \eqref{eq:exp-decay-A} and the inequality \eqref{eq:holder-cegrell-class} for $k=n$  the lemma follows.
\end{proof}

We are ready to prove the main result of this section. 

\begin{proof}[Proof of Proposition~\ref{prop:vol-cap}] 
Let us denote $\nu := (dd^c\vphi)^n$. First, we show that for $v\in \cE_0'$ 
there exist uniform constants $\alpha_1, C>0$ such that 
\[\label{eq:sharp-vol-cap}
	\nu (v<-s) \leq \frac{Ce^{-\alpha_1s}}{s^n} \quad \forall s>0.
\]
Indeed, there are two possibilities either $s\geq 2$ or $s<2$. If $s \geq 2$, then the inequality follows from Lemma~\ref{lem:exp-decay} as  
$$s^n e^{-\tau s/2} \leq \left(\frac{2n}{\tau}\right)^n e^{-n}.$$ 
(We can take $\alpha_1 = \tau/2$). Otherwise, if $0<  s <2$, then  we have $e^{-\alpha_1 s} \geq C.$ Then, the desired inequality follows from Lemma~\ref{lem:cegrell-sublevel}.

To complete the proof of the proposition we use an argument which is inspired by the proofs in \cite{ACKPZ09}. Let $K\subset \Omega$ be compact. Since $\nu$ is dominated by a Monge-Amp\`ere measure of a bounded plurisubharmonic function, it vanishes on pluripolar sets. Hence, we may assume that $K$ is non-pluripolar. Let $h_K^*$ be the relative extremal function of  $K$ with respect to $\Omega$.  
Since $K \subset \Omega$ is compact, it is well-known that 
\[\notag
	\lim_{\zeta \to \d \Omega} h_{K}^*(\zeta)=0.
\]
By \cite[Proposition~5.3]{BT82} we have
\[\notag
	\tau^n:= cap(K, \Omega) = \int_{\Omega} (dd^ch_K^*)^n >0.
\]
Let $0<x<1.$ Since the function $w:= \frac{h_K^*}{\tau}$ satisfies assumptions of the inequality \eqref{eq:sharp-vol-cap}, we have
\[\notag
	\nu (h_K^* < -1 +x) = \nu \left(w < \frac{-1+x}{\tau} \right) \leq C \frac{\tau^n}{\alpha_1^n (1-x)^n}\exp\left({-\frac{\alpha_1(1-x)}{\tau}}\right).
\]
Let $x\to 0^+$, we obtain
\[\label{eq:cap-ineq1}
	\nu(h_K^* \leq -1) \leq  \frac{C}{\alpha_1^n} cap(K,\Omega)\exp\left({\frac{-\alpha_1}{\left[cap(K,\Omega)\right]^\frac{1}{n}}}\right).
\]
Since $h_K = h_K^*$ outside a pluripolar set, we have
\[\label{eq:cap-ineq2}
	\nu(K) \leq \nu(h_K =-1) = \nu(h_K^* =-1) \leq \nu(h_K^* \leq -1).
\]
We combine \eqref{eq:cap-ineq1} and \eqref{eq:cap-ineq2} to finish the proof.
\end{proof}

\section{Proof of Theorem B}

In this section we will prove the H\"older continuity of the solution obtained in Theorem A provided furthermore that the boundary data $\psi$ is H\"older continuous.  Notice that the zero boundary values of the subsolution $\vphi$ is not essential.  We can modify it by adding  an appropriate envelope, similar to \eqref{eq:envelope}, because no condition has been imposed on the total mass of the subsolution.

By Theorem A there exists a unique continuous solution to the Dirichlet problem \eqref{eq:dirichlet-prob}, namely, $u \in PSH(\Omega)\cap C^0(\bar\Omega)$ solving
\[\label{eq:sol-B}
	(dd^c u)^n = \mu, \quad u(z) = \psi (z) \quad \forall z\in \d\Omega.
\]
We are going to  show that $u \in C^{0,\alpha'}(\bar\Omega)$  for some exponent $0<\alpha'\leq 1.$ 

\medskip

{\em Outline of the proof.\rm} Let us sketch  the proof of Theorem~B.
Overall we follow the steps in the proof of \cite{Ng17a} which in turns followed \cite{GKZ08}. Though, we need to consider the problem on an increasing  exhaustive sequence of relatively compact domains in $\Omega$.  Denote for $\delta>0$ small
\[
	\Omega_\delta:= \{z\in \Omega: dist(z,\d\Omega) >\delta\};
\]
and for $z\in \Omega_\delta$ we define
\[\begin{aligned}
&	u_\delta(z) := \sup_{|\zeta| \leq \delta} u(z+\zeta), \\
&	\hat u_\delta(z):= \frac{1}{\sigma_{2n}\delta^{2n}} \int_{|\zeta| \leq \delta} u(z+\zeta) dV_{2n}(\zeta),
\end{aligned}\]
where $\sigma_{2n}$ the volume of the unit ball.

Then, we wish to show that
$$\sup_{\Omega_\delta} (\hat u_\delta - u) \lesssim  \delta^{\vpi}  
$$
for some $0<\vpi \leq 1$. 
Thanks to the H\"older continuity of the boundary data we can extend $\hat u_\delta$ to $\tilde u$ 
 by a gluing process such that the new function is plurisubharmonic on $\Omega$ and equal to $u$ outside $\Omega_\vepsilon$ for some (small) $\vepsilon > \delta$. Moreover, we shall  still have
$$	
	\sup_{\Omega_\delta} (\hat u_\delta - u) 
	\leq \sup_{\Omega} (\tilde u -u) + C \vepsilon^\alpha,
$$
where $\alpha$ is the H\"older exponent of the boundary data $\psi$.
Next, we shall show that
\[\notag
	\int_{\Omega_\vepsilon} (dd^c\vphi)^n \lesssim \frac{1}{\vepsilon^n}.
\]
This estimate enables us to invoke the results of \cite{Ng17a}. It gives a precise quantitative estimate  $\sup_{\Omega} (\tilde u -u)$ in terms of $\delta$ and $\vepsilon$. 
Finally, we can choose $\vepsilon = \delta^{\vpi'}$ with $\vpi'>0$ so small that our desired inequality holds.

\medskip

We now proceed to give details of the argument. 
For the rest of the arguments we fix a small $\delta_0>0$ and consider two parameters $\delta, \vepsilon$ such that
\[\label{eq:eps-del}
0 < \delta \leq \vepsilon < \delta_0.
\]
We may assume that $\psi\in C^{0,2\alpha}(\d\Omega)$, where $0<\alpha\leq 1/2$ (decreasing $\alpha$ if necessary) is the H\"older exponent of the subsolution $\vphi$. Then, we define
\[\label{eq:envelope}
	h(z) = \sup\{v(z) \in PSH(\Omega)\cap C^0(\bar\Omega): h_{|_{\d\Omega}} \leq \psi\}.
\]
It is well-known \cite[Theorem~6.2]{BT76} that $h \in PSH(\Omega)\cap C^{0,\alpha}(\bar\Omega)$ and $h = \psi$ on $\d\Omega$, which is also the solution of the homogeneous Monge-Amp\`ere equation in $\Omega$. Hence, we may assume that 
\[
	\psi \in PSH(\Omega) \cap C^{0,\alpha}(\bar\Omega)\quad \mbox{and}\quad (dd^c\psi)^n \equiv 0.
\]
Thanks to the comparison principle \cite{BT82} we get that
\[\label{eq:boundary-hol}
	\psi + \vphi \leq u \leq \psi \quad \mbox{ on } \bar\Omega.
\]
\begin{lem} \label{lem:boundary-holder}
We have for $z\in \bar\Omega_{\delta}\setminus\Omega_{\vepsilon}$,
\[	u_\delta(z) \leq u(z) + C \vepsilon^\alpha.
\]
In particular, 
\[
	\sup_{\Omega_\delta} (\hat u_\delta - u) \leq \sup_{\Omega_\vepsilon} (\hat u_\delta - u) + C \vepsilon^\alpha.
\]
\end{lem}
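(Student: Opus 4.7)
The key ingredient is the sandwich $\psi + \vphi \leq u \leq \psi$ on $\bar\Omega$ from \eqref{eq:boundary-hol}, together with the H\"older regularity of both the extended boundary data $\psi$ and the subsolution $\vphi$. I would combine these as follows. Fix $z \in \bar\Omega_\delta \setminus \Omega_\vepsilon$, so that $\mathrm{dist}(z,\d\Omega) \leq \vepsilon$. Since $\vphi \in C^{0,\alpha}(\bar\Omega)$ vanishes on $\d\Omega$, this immediately gives $|\vphi(z)| \leq C \vepsilon^\alpha$. For any $\zeta \in \bC^n$ with $|\zeta| \leq \delta$, the point $z+\zeta$ lies in $\bar\Omega$, and combining the upper bound $u(z+\zeta) \leq \psi(z+\zeta)$ with the lower bound $u(z) \geq \psi(z) + \vphi(z)$ yields
\[
u(z+\zeta) - u(z) \;\leq\; [\psi(z+\zeta) - \psi(z)] - \vphi(z) \;\leq\; C \delta^\alpha + C \vepsilon^\alpha \;\leq\; C \vepsilon^\alpha,
\]
where I used that $\psi \in C^{0,\alpha}(\bar\Omega)$ and the standing assumption $\delta \leq \vepsilon$ from \eqref{eq:eps-del}. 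Taking the supremum over $|\zeta|\leq\delta$ produces the asserted pointwise estimate $u_\delta(z) \leq u(z) + C\vepsilon^\alpha$.

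For the second assertion, note that $\hat u_\delta \leq u_\delta$ pointwise, as an average over the ball of radius $\delta$ is dominated by the supremum over that ball. Hence the bound $\hat u_\delta(z) - u(z) \leq C\vepsilon^\alpha$ also holds on $\bar\Omega_\delta \setminus \Omega_\vepsilon$. Splitting $\Omega_\delta = \Omega_\vepsilon \cup (\Omega_\delta \setminus \Omega_\vepsilon)$ and bounding $\sup_{\Omega_\delta}(\hat u_\delta - u)$ by the maximum of the supremum over each piece gives the stated inequality.

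\textbf{Main obstacle.} There is no genuine analytic difficulty at this step: once \eqref{eq:boundary-hol} and the H\"older extension of $\psi$ through the envelope \eqref{eq:envelope} are in hand, the lemma is bookkeeping with H\"older moduli. The only point requiring some care is to verify that the H\"older constants of $\psi$ and $\vphi$ on $\bar\Omega$ provide uniform estimates valid up to the boundary (rather than merely at interior points), but this is precisely what the definitions in the excerpt give, since both functions are assumed H\"older on the closed domain.
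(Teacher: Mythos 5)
Your proof is correct and follows essentially the same route as the paper's: both rest on the sandwich $\psi + \vphi \leq u \leq \psi$, the vanishing of $\vphi$ on $\d\Omega$ together with its H\"older modulus to control $-\vphi(z)$ at points within distance $\vepsilon$ of the boundary, and the H\"older modulus of $\psi$ to control the translate. The only cosmetic difference is that the paper fixes a maximizing $\zeta_1$ with $u_\delta(z)=u(z+\zeta_1)$ and a boundary point $z+\zeta_2$, while you take a supremum over $|\zeta|\le\delta$ at the end; the second assertion is deduced identically via $\hat u_\delta\le u_\delta$ and splitting $\Omega_\delta$.
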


\begin{remark} It is important to keep in mind that the uniform constants $C>0$ appeared in the lemma, and many times below are independent of  $\delta$ and  $\vepsilon$. 
\end{remark}

\begin{proof} Fix a point $z\in \bar\Omega_\delta\setminus\Omega_\vepsilon$. Since $u$ is continuous, there is $\zeta_1 \ \in \bC^n$ with $|\zeta_1| \leq \delta$ such that
\[\label{eq:point1}
	u_\delta(z) = u(z + \zeta_1).
\]
Moreover, there exists $\zeta_2\in \bC^n$ with $|\zeta_2| \leq \vepsilon$ such that $z+\zeta_2 \in \d\Omega$. Using this and \eqref{eq:boundary-hol} we get that
\[\begin{aligned} 
	u_\delta(z) - u(z) 
&\leq 	\psi(z+ \zeta_1) - [\psi(z) +\vphi(z)] \\
&= 	[\psi (z+ \zeta_1) - \psi(z)] + [\vphi(z) - \vphi(z+\zeta_2)] \\
&\leq	 	C_1 |\zeta_1|^{\alpha} + C_2|\zeta_2|^{\alpha},	
\end{aligned}\]
where $C_1 = \|\psi\|_{C^{0,\alpha}}, C_2= \|\vphi\|_{C^{0,\alpha}}$. Since $\delta \leq \vepsilon$ we conclude the proof of the first part.

To prove the second part, we observe that  $u \leq \hat u_\delta \leq u_\delta$. Therefore,
\[
	\sup_{\Omega_\delta} (\hat u_\delta - u) \leq \sup_{\Omega_\vepsilon} (\hat u_\delta - u) + \sup_{\Omega_\delta \setminus \Omega_\vepsilon} (u_\delta -u).
\]
Combining this with the first part we get the the second part.
\end{proof}

The lemma above tells us that to obtain the H\"older continuity of the solution $u$ it is enough to get the estimate on the domain $\Omega_\vepsilon$ for $\vepsilon$ being of a small constant compared to  $\delta$.  To achive our goal we will work on the domain $\Omega_\vepsilon$ and keep track of the (negative) exponent of $\vepsilon$.

Recall that
\[\label{eq:omega-epsilon}
\Omega_\vepsilon= \{z\in \Omega: dist(z,\d\Omega) >\vepsilon\}.
\]
We define 
\[\label{eq:d-domain}	 D_{\vepsilon}:= \{\rho(z) < - \vepsilon\},
\]
where $\rho$ is the defining function of $\Omega$ as in \eqref{eq:defining-fct}. 
The following lemma is very similar to Lemma~\ref{lem:cegrell-sublevel}. The main observation is that the domains $D_\vepsilon$ and $\Omega_\vepsilon$ are comparable.

\begin{lem}\label{lem:mass-est} 
Let  $1\leq k \leq n$ be an integer. Let $v\in PSH\cap L^\infty(\Omega)$. Then, 
\[
	\int_{\Omega_\vepsilon} (dd^cv)^k \wed \beta^{n-k} \leq \frac{C  \|v\|_\infty^k}{\vepsilon^k},
\]
where $C$ is independent of $\vepsilon.$
\end{lem}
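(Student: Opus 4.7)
The plan is to mimic the proof of Lemma~\ref{lem:cegrell-sublevel}, now with the role of the sublevel set $\{v<-s\}$ played by $\Omega_\vepsilon$. The key observation, already flagged by the author, is the comparability of $\Omega_\vepsilon$ with the sublevel set $D_{c_1\vepsilon}=\{\rho<-c_1\vepsilon\}$: since $\rho\in C^2(\bar\Omega)$ satisfies $d\rho\neq 0$ on $\partial\Omega$, there is a constant $c_1>0$ depending only on $\rho$ such that $-\rho(z)\geq c_1\,\mathrm{dist}(z,\partial\Omega)$, and consequently
$$\Omega_\vepsilon\subset D_{c_1\vepsilon}$$
for all sufficiently small $\vepsilon$. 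After subtracting $\|v\|_\infty$ I may assume $v\leq 0$ on $\Omega$; this at worst multiplies $\|v\|_\infty^k$ by $2^k$, which is absorbed into the final constant.

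Next I would introduce the auxiliary function
$$\rho_\vepsilon:=\max\!\bigl(\rho,\,-\tfrac{c_1\vepsilon}{2}\bigr),$$
which is bounded plurisubharmonic and coincides with $\rho$ on the open set $\{\rho>-c_1\vepsilon/2\}$, a neighborhood of $\partial\Omega$. In particular $\rho_\vepsilon-\rho\geq 0$ on $\bar\Omega$, $\rho_\vepsilon-\rho\to 0$ at $\partial\Omega$, and $\rho_\vepsilon-\rho\geq c_1\vepsilon/2$ on $\Omega_\vepsilon\subset D_{c_1\vepsilon}$. The point of this construction is that $\rho_\vepsilon-\rho$ provides a nonnegative barrier of size $\sim\vepsilon$ on $\Omega_\vepsilon$ that vanishes near the boundary, exactly the set-up required by Lemma~\ref{lem:blocki}.

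I would then apply Lemma~\ref{lem:blocki} with upper function $h=\rho_\vepsilon$, lower function $\rho$, and wedge factors $v_1=\cdots=v_k=v$ and $v_{k+1}=\cdots=v_n=|z|^2-R$, where $R$ is chosen so large that $|z|^2-R\leq 0$ on $\bar\Omega$ (hence $dd^c v_j=\beta$ for $j>k$). All hypotheses are met, and the inequality yields
$$\int_\Omega(\rho_\vepsilon-\rho)^k\,(dd^c v)^k\wed\beta^{n-k}\leq k!\,\|v\|_\infty^k\int_\Omega(dd^c\rho)^k\wed\beta^{n-k}.$$
Since $\rho\in C^2(\bar\Omega)$, we have $dd^c\rho\leq C'\beta$ pointwise, so the right-hand integral is bounded by a finite constant $C_0=C_0(\rho,\Omega)$ independent of both $\vepsilon$ and $v$. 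Restricting the left-hand side to $\Omega_\vepsilon$, where $\rho_\vepsilon-\rho\geq c_1\vepsilon/2$, gives
$$\Bigl(\frac{c_1\vepsilon}{2}\Bigr)^k\int_{\Omega_\vepsilon}(dd^c v)^k\wed\beta^{n-k}\leq k!\,C_0\,\|v\|_\infty^k,$$
which is the desired estimate with $C=k!\,C_0\,(2/c_1)^k$.

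There is no serious obstacle: the only points requiring care are the correct pairing of functions in Lemma~\ref{lem:blocki} (so that $(dd^c v)^k$ appears on the \emph{integrand} side and $(dd^c\rho)^k$ on the \emph{bound} side) and verifying the boundary hypothesis $\rho_\vepsilon-\rho\to 0$ at $\partial\Omega$, which is immediate once one notices that $\rho_\vepsilon\equiv\rho$ near $\partial\Omega$. The argument is entirely Chern--Levine--Nirenberg in spirit; neither the H\"older continuity of $\vphi$ nor any finite-mass assumption is used here.
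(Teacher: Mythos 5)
Your proof is correct and follows essentially the same route as the paper: comparability of $\Omega_\vepsilon$ with $\{\rho<-c\vepsilon\}$ via a Hopf-type bound, the barrier $\max\{\rho,-c\vepsilon/2\}-\rho$, and B\l ocki's inequality (Lemma~\ref{lem:blocki}) with $v_1=\cdots=v_k$ the given psh function and the remaining slots filled by a potential for $\beta$. The only cosmetic difference is that you spell out the normalization $v\mapsto v-\|v\|_\infty$ and the choice $v_j=|z|^2-R$, which the paper leaves implicit.
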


\begin{proof} Observe that, from Hopf's lemma,
\[ \label{eq:dist-bound} 
|\rho(z)| \geq c_0 dist(z, \d\Omega)
\] 
for a  uniform constant $0 < c_0 \leq 1$. Therefore, 
\[\label{eq:inclusion}
	\Omega_\vepsilon \subset \{\rho(z) < - c_0 \vepsilon\}.
\]
Since $\max\{\rho, -\vepsilon'/2\} - \rho \geq \vepsilon'/2$ with $\vepsilon'= c_0\vepsilon$ on the latter set, it follows  that
\[\begin{aligned}
&	\int_{\Omega_\vepsilon} (dd^cv)^k \wed \beta^{n-k} \\
&\leq		\left(\frac{2}{\vepsilon'}\right)^k\int_{\Omega} \left(\max\{\rho, -\vepsilon'/2\} - \rho\right)^k (dd^cv)^k \wed \beta^{n-k} \\
&\leq		\frac{C\|v\|_\infty^k}{\vepsilon^k}  \int_{\Omega} (dd^c\rho)^k\wed \beta^{n-k},
\end{aligned}\]
where we used Lemma~\ref{lem:blocki} for the second inequality. The last integral is bounded by the $C^2-$smoothness of $\rho$ on $\bar\Omega$.
\end{proof}

We will now approximate the subsolution $\vphi$. Let us denote
\[
\vphi_\vepsilon := \max\{\vphi-\vepsilon, A \rho/\vepsilon\},
\]
where $A:= 1+ \|\vphi\|_\infty$. 

\begin{lem} \label{lem:sub-sol-mass-est}
We have \[
	\int_\Omega (dd^c\vphi_\vepsilon)^n \leq \frac{C A^n}{\vepsilon^{n}}. 
\]
Moreover, 
\[ {\bf 1}_{D_\vepsilon} \cdot \mu \leq (dd^c \vphi_\vepsilon)^n\] as two measures, where $D_\vepsilon$ is defined in \eqref{eq:d-domain}.
\end{lem}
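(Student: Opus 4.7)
The plan is to exploit that the two functions defining the maximum take over in complementary regimes: near $\d\Omega$ the smooth comparison function $A\rho/\vepsilon$ wins (because it vanishes on $\d\Omega$ while $\vphi - \vepsilon$ is strictly negative there), whereas on $D_\vepsilon$ the subsolution term $\vphi - \vepsilon$ wins (because $A\rho/\vepsilon$ is forced well below $-A$). This dichotomy yields both conclusions.

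For the domination statement, on $D_\vepsilon = \{\rho < -\vepsilon\}$ one has
\[ A\rho/\vepsilon < -A = -1 - \|\vphi\|_\infty \leq \vphi - 1 \leq \vphi - \vepsilon \]
(assuming $\vepsilon \leq 1$ without loss of generality). Hence $\vphi_\vepsilon = \vphi - \vepsilon$ on the open set $D_\vepsilon$, and by locality of the Monge-Amp\`ere operator $(dd^c\vphi_\vepsilon)^n = (dd^c\vphi)^n$ there. Combined with $\mu \leq (dd^c\vphi)^n$ this gives ${\bf 1}_{D_\vepsilon} \mu \leq {\bf 1}_{D_\vepsilon}(dd^c\vphi_\vepsilon)^n \leq (dd^c\vphi_\vepsilon)^n$.

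For the mass estimate, the key observation is that on $\d\Omega$ we have $A\rho/\vepsilon - (\vphi - \vepsilon) = \vepsilon > 0$; by continuity of $\vphi$ and $\rho$ on $\bar\Omega$ this strict inequality persists on some open neighborhood $U$ of $\d\Omega$ in $\bar\Omega$, so $\vphi_\vepsilon \equiv A\rho/\vepsilon$ on $U$. Consequently $f := \vphi_\vepsilon - A\rho/\vepsilon$ is a nonnegative continuous function which vanishes on $U$ and is therefore compactly supported in $\Omega$. Applying the telescoping identity
\[ (dd^c\vphi_\vepsilon)^n - (dd^c(A\rho/\vepsilon))^n = \sum_{i=0}^{n-1} dd^c f \wed T_i, \]
with $T_i := (dd^c\vphi_\vepsilon)^i \wed (dd^c(A\rho/\vepsilon))^{n-1-i}$, and noting that each $T_i$ is a closed positive $(n-1,n-1)$-current while $f$ is continuous and compactly supported in $\Omega$, the standard pluripotential-theoretic integration by parts gives $\int_\Omega dd^c f \wed T_i = 0$ for every $i$. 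Summing,
\[ \int_\Omega (dd^c\vphi_\vepsilon)^n = \int_\Omega (dd^c(A\rho/\vepsilon))^n = (A/\vepsilon)^n \int_\Omega (dd^c\rho)^n \leq C A^n /\vepsilon^n, \]
since $\rho$ is $C^2$ on $\bar\Omega$.

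The main technical point is justifying $\int_\Omega dd^c f \wed T_i = 0$ when $f$ is merely continuous and not smooth. This follows from the Bedford-Taylor theory for wedge products of Monge-Amp\`ere currents of bounded continuous PSH functions paired with compactly supported continuous test functions; alternatively one can approximate $\vphi_\vepsilon$ and $A\rho/\vepsilon$ from above by smooth PSH functions which continue to coincide in a common neighborhood of $\d\Omega$, perform Stokes in the smooth setting, and pass to the limit using weak continuity of the Monge-Amp\`ere operator under such monotone approximations.
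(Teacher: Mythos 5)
Your argument for the domination statement is essentially the one in the paper: on $D_\vepsilon$ the term $A\rho/\vepsilon$ drops below $-A \leq \vphi - \vepsilon$, so $\vphi_\vepsilon = \vphi - \vepsilon$ there, and one invokes locality of the Monge--Amp\`ere operator together with $\mu \leq (dd^c\vphi)^n$.

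For the mass estimate you take a genuinely different route. The paper observes that $A\rho/\vepsilon \leq \vphi_\vepsilon \leq 0$, that both tend to $0$ on $\d\Omega$, and then cites the Bedford--Taylor comparison-principle consequence \cite[Corollary~4.3]{BT82} to get $\int_\Omega(dd^c\vphi_\vepsilon)^n \leq \int_\Omega(dd^c(A\rho/\vepsilon))^n = (A/\vepsilon)^n\int_\Omega(dd^c\rho)^n$ directly. You instead notice the stronger structural fact that $\vphi_\vepsilon$ \emph{coincides} with $A\rho/\vepsilon$ on a neighbourhood of $\d\Omega$ (since on $\d\Omega$ the competitor $\vphi - \vepsilon$ equals $-\vepsilon < 0 = A\rho/\vepsilon$), so that $f = \vphi_\vepsilon - A\rho/\vepsilon$ has compact support, and then use the telescoping identity plus a Stokes-type argument to conclude \emph{equality} of total masses. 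Both are valid and rest on the same underlying Bedford--Taylor theory; your route re-derives, in the special case where the two potentials agree near the boundary, the mass comparison that the paper cites off the shelf. Your version is slightly more self-contained and yields a marginally sharper conclusion (equality rather than inequality), at the cost of having to justify $\int_\Omega dd^c f \wedge T_i = 0$ for merely continuous $f$ and positive closed $T_i$, a point you correctly flag and address by smooth approximation that preserves the agreement near $\d\Omega$. The paper's version is shorter since the needed corollary of the comparison principle is already in the literature. Either proof serves the purpose in the rest of the section.
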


\begin{proof}  To estimate the Monge-Amp\`ere mass of $\vphi_\vepsilon$ we use a result of Bedford and Taylor \cite[Corollary~4.3]{BT82} which is a consequence of the comparison principle.
Since $\frac{A \rho}{\vepsilon} \leq \vphi_\vepsilon \leq 0$ and the functions have the zero values on the boundary, 
\[
	\int_{\Omega} (dd^c\vphi_\vepsilon)^n 
	\leq  \frac{A^n}{\vepsilon^n}\int_{\Omega} (dd^c \rho)^n.
\]
The last integral is finite as  $\rho$ is $C^2$ on a neighborhood of the closure of $\Omega$. Furthermore,  since $\vphi_\vepsilon(z) = \vphi(z)-\vepsilon$ on $D_\vepsilon = \{\rho<-\vepsilon\}$ as $\vepsilon>0$ small, it is clear that $${\bf 1}_{D_\vepsilon} \cdot \mu \leq (dd^c \vphi_\vepsilon)^n.$$
This completes the proof of the lemma.
\end{proof}

\begin{remark} \label{rmk:mass-control}
Using the same argument we also get that for an integer $1\leq k \leq n$
\[
	\int_\Omega (dd^c\vphi_\vepsilon)^k \wed \beta^{n-k} \leq \frac{CA^k}{\vepsilon^k}.
\]
\end{remark}

We obtain now the volume-capacity inequality for the approximation sequence.
  
\begin{cor} \label{cor:vol-cap}
There exists  uniform constants $\alpha_1>0$ and $C>0$ which are independent of $\vepsilon$ such that for every compact set $K\subset \Omega$,
\[
	\int_K (dd^c\vphi_\vepsilon)^n \leq \frac{C}{\vepsilon^{n}}  \cdot cap(K) \cdot \exp\left(\frac{-\alpha_1}{[cap(K)]^\frac{1}{n}}\right).
\]
In particular, for a fixed $\tau>0$, there is a constant $C(\tau)>0$ such that
for every compact set $K\subset \Omega$,
\[
	\int_K (dd^c\vphi_\vepsilon)^n \leq \frac{C(\tau)}{\vepsilon^{n}} \left[cap(K)\right]^{1+\tau}.
\]
\end{cor}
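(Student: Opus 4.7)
The plan is to decompose $(dd^c\vphi_\vepsilon)^n$ into two pieces, each amenable to Proposition~\ref{prop:vol-cap}. Since $\vphi_\vepsilon = \max\{\vphi-\vepsilon,\, A\rho/\vepsilon\}$ and both arguments are bounded plurisubharmonic, the standard upper bound for the complex Monge--Amp\`ere operator of a maximum of two bounded plurisubharmonic functions gives
\[\notag
(dd^c\vphi_\vepsilon)^n \leq (dd^c\vphi)^n + \frac{A^n}{\vepsilon^n}(dd^c\rho)^n
\]
as positive Borel measures on $\Omega$.

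I would then apply Proposition~\ref{prop:vol-cap} twice: directly to the subsolution $\vphi$, and to the defining function $\rho$ itself (which is $C^2$ on $\bar\Omega$, plurisubharmonic, vanishes on $\partial\Omega$, and satisfies $(dd^c\rho)^n \in \cM(\rho,\Omega)$, with $\rho$ itself playing the role of a H\"older continuous subsolution). This yields uniform constants $C_1, C_2, \beta_1, \beta_2 > 0$ independent of $\vepsilon$ such that
\[\notag
\int_K (dd^c\vphi)^n \leq C_1\, cap(K) \exp\!\left(\frac{-\beta_1}{[cap(K)]^{1/n}}\right), \quad
\int_K (dd^c\rho)^n \leq C_2\, cap(K) \exp\!\left(\frac{-\beta_2}{[cap(K)]^{1/n}}\right).
\]
Combining with the measure inequality and setting $\alpha_1 := \min\{\beta_1, \beta_2\}$, $C := C_1 + A^n C_2$ yields the first inequality of the corollary. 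The ``in particular'' clause then follows from the elementary fact that for each fixed $\tau > 0$ one has $\exp(-\alpha_1/t^{1/n}) \leq C(\tau)\, t^\tau$ uniformly for $t$ in any bounded range (a consequence of $e^{-x} x^{-n\tau} \to 0$ as $x \to \infty$); substituting $t = cap(K)$ upgrades the first estimate to the second.

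The technically delicate step is the measure inequality $(dd^c\max(u,v))^n \leq (dd^c u)^n + (dd^c v)^n$ for bounded plurisubharmonic $u, v$. Off the coincidence set $\{u=v\}$ this is immediate from the Bedford--Taylor description of $(dd^c\max(u,v))^n$ on the open sets $\{u>v\}$ and $\{u<v\}$; on $\{u=v\}$ the contribution is controlled by regularizing the max through $\max_\eta(a,b) = \eta\log(e^{a/\eta} + e^{b/\eta})$ and invoking the weak continuity of Monge--Amp\`ere measures under uniformly decreasing limits. All remaining steps are routine applications of Proposition~\ref{prop:vol-cap} and algebra.
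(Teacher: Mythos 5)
Your reduction hinges on the claimed measure inequality
\[
(dd^c\max(u,v))^n \leq (dd^c u)^n + (dd^c v)^n
\]
for bounded plurisubharmonic $u,v$, and this inequality is false. The Monge--Amp\`ere operator of a maximum can create positive mass on the coincidence set $\{u=v\}$ that is not seen by either $(dd^c u)^n$ or $(dd^c v)^n$. A concrete counterexample in $\bC^2$ (with both functions smooth, so neither regularity nor pluripolarity of the coincidence set is the issue): take $u(z) = \operatorname{Re} z_1 + |z_2|^2$ and $v(z) = -\operatorname{Re} z_1 + |z_2|^2$ on a bounded domain. Then $dd^cu = dd^cv = dd^c|z_2|^2$ has rank one, so $(dd^cu)^2 = (dd^cv)^2 = 0$. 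But $\max(u,v) = |\operatorname{Re} z_1| + |z_2|^2$, and
\[
(dd^c\max(u,v))^2 = 2\,dd^c|\operatorname{Re} z_1| \wedge dd^c|z_2|^2,
\]
which is a nonzero positive measure supported on the real hypersurface $\{\operatorname{Re} z_1 = 0\}$. The log-sum-exp smoothing $\max_\eta$ does not rescue the argument: the weak limit of $(dd^c\max_\eta)^n$ is indeed $(dd^c\max)^n$, but the mass of the approximants concentrates on the coincidence set precisely to produce the extra term, so no upper bound of the desired form is inherited. What one does have is only the Demailly-type \emph{lower} bound $(dd^c\max(u,v))^n \geq \mathbf{1}_{\{u\geq v\}}(dd^cu)^n + \mathbf{1}_{\{u<v\}}(dd^cv)^n$, together with a comparison of \emph{total} masses when the boundary values agree (which is exactly how Lemma~\ref{lem:sub-sol-mass-est} gets $\int_\Omega(dd^c\vphi_\vepsilon)^n\leq CA^n/\vepsilon^n$); neither of these gives a pointwise measure bound on an arbitrary compact $K$.

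Because of this the decomposition strategy does not work, and the corollary cannot be deduced by applying Proposition~\ref{prop:vol-cap} separately to $\vphi$ and to $\rho$. The paper instead re-runs the entire proof of Proposition~\ref{prop:vol-cap} (through Lemmas~\ref{lem:cegrell-sublevel} and~\ref{lem:exp-decay}) with $\vphi$ replaced by $\vphi_\vepsilon$, tracking how the constants degrade in $\vepsilon$. The three inputs are $\|\vphi_\vepsilon\|_\infty\leq C/\vepsilon$, $\|\vphi_\vepsilon\|_{C^{0,\alpha}(\bar\Omega)}\leq C/\vepsilon$, and $\int_\Omega(dd^c\vphi_\vepsilon)^k\wedge\beta^{n-k}\leq C/\vepsilon^k$ from Remark~\ref{rmk:mass-control}. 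These feed into the inductive argument for the analogue of \eqref{eq:holder-cegrell-class} and produce a constant of the form $C/\vepsilon^n$, while the exponent $\alpha_1$ depends only on the fixed H\"older exponent $\alpha$ of $\vphi$ (and of $\rho$), hence is $\vepsilon$-independent. Your final step (passing from the exponential bound to $[cap(K)]^{1+\tau}$) is correct and is the same elementary estimate the paper uses, but the first step needs to be replaced.
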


\begin{proof} This is the analogue of Proposition~\ref{prop:vol-cap} with $\vphi$ is replaced by $\vphi_\vepsilon.$ The proof is a repetition of the one of this proposition. Here we need to take into account three facts: 
\[
	\|\vphi_\vepsilon\|_{\infty} \leq \frac{C}{\vepsilon} \quad\mbox{and}\quad \|\vphi_\vepsilon\|_{C^{0,\alpha}(\bar\Omega)} \leq \frac{C}{\vepsilon}, 
\]
and for an integer $1\leq k \leq n$ (Remark~\ref{rmk:mass-control}),
\[ 
	\int_\Omega (dd^c\vphi_\vepsilon)^k \wed \beta^{n-k} \leq \frac{C}{\vepsilon^{k}}. 
\]
This explains why we need an extra factor $C/\vepsilon^{n}$ on the right hand side of the inequality.
\end{proof}

Next, we have the following stability estimate for the Monge-Amp\`ere equation similar to \cite[Theorem~1.1]{GKZ08}. However, it also takes into account the possibility of infinite total mass of the measure on the right hand side.

\begin{prop} \label{prop:stability} Let $u$ be the solution of the equation \eqref{eq:sol-B} and $\Omega_\vepsilon$ is defined by \eqref{eq:omega-epsilon}. 
Let $v\in PSH\cap L^\infty(\Omega)$ be such that  $v= u$ on $\Omega\setminus \Omega_\vepsilon$. Then, there is $0<\alpha_2\leq 1$ such that
\[
	\sup_{\Omega} (v - u) \leq \frac{C}{\vepsilon^{n}} \left(\int_{\Omega} \max\{v -u, 0\} d\mu\right)^{\alpha_2}.
\]
\end{prop}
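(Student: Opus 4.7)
The plan is to follow the capacity method of Ko\l odziej, as executed in \cite{GKZ08} and \cite{Ng17a}, with the H\"older subsolution $\vphi$ replaced by its bounded truncation $\vphi_{c_0\vepsilon}$, where $c_0$ is the Hopf constant from \eqref{eq:dist-bound}. The substitution is necessary because $\vphi$ may have infinite Monge--Amp\`ere total mass, whereas $\vphi_{c_0\vepsilon}$ lies in $\cE_0$ and, by Lemma~\ref{lem:sub-sol-mass-est} together with the inclusion $\Omega_\vepsilon\subset D_{c_0\vepsilon}$ recorded in \eqref{eq:inclusion}, it still dominates $\mu$ on the relevant region: $\mathbf{1}_{\Omega_\vepsilon}\cdot\mu \le (dd^c\vphi_{c_0\vepsilon})^n$. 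The cost of this truncation is that $\|\vphi_{c_0\vepsilon}\|_\infty \le C/\vepsilon$ and that Corollary~\ref{cor:vol-cap} carries a factor $\vepsilon^{-n}$; together these will produce the $\vepsilon^{-n}$ in the conclusion.

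The problem localizes to $\Omega_\vepsilon$ because $v=u$ outside, so every sublevel set $U(s):=\{v-u>s\}$ with $s>0$ is relatively compact in $\Omega_\vepsilon$. First, for $s>0$ and $0<t<1$, the standard Ko\l odziej comparison principle argument applied to the pair $v-s$ and $(1-t)u+t\vphi_{c_0\vepsilon}$ — exploiting that $(1-t)u+t\vphi_{c_0\vepsilon}\le u$ on $\partial\Omega$ and that $v=u$ on $\partial U(s+t)\cap\Omega$ — yields the capacity--mass inequality
\[
t^n\,cap\bigl(U(s+t)\bigr)\le \int_{U(s)}(dd^c u)^n = \mu(U(s)).
\]
Second, Corollary~\ref{cor:vol-cap} applied to $\vphi_{c_0\vepsilon}$ with a small fixed $\tau>0$ bounds $\mu(U(s+t))\le C\vepsilon^{-n}[cap(U(s+t))]^{1+\tau}$. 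Writing $a(s):=[cap(U(s))]^{1/n}$ and chaining the two inequalities yields the functional bound
\[
t\,a(s+t) \le C\,\vepsilon^{-1}\,a(s)^{1+\tau}.
\]

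To this bound one applies Ko\l odziej's iteration lemma (as in \cite[Lemma~2.3]{GKZ08} or the variant used in \cite{Ng17a}), which forces $a(s)=0$ for every $s\ge s_\infty$, with an explicit estimate on $s_\infty$ in terms of any chosen starting point $s_0$ and the value $a(s_0)$. Initializing at some $s_0>0$ where the trivial Chebyshev bound $\mu(U(s_0))\le N/s_0$ (with $N:=\int_\Omega (v-u)_+\,d\mu$) controls $a(s_0)$ via Corollary~\ref{cor:vol-cap}, and then optimizing in $s_0$, one arrives at $s_\infty \le C\vepsilon^{-n}N^{\alpha_2}$ for a suitable $\alpha_2\in(0,1]$, which is the desired bound on $\sup_\Omega(v-u)$.

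The main obstacle is the careful book-keeping of the $\vepsilon$-dependence: every appearance of $\vphi_{c_0\vepsilon}$ in Corollary~\ref{cor:vol-cap} contributes an extra power of $\vepsilon^{-1}$, and one must verify that after the iteration and the optimization in $s_0$ the net power is exactly $n$. The small exponent $\tau$ furnished by Corollary~\ref{cor:vol-cap} provides the slack needed for the geometric series in Ko\l odziej's iteration to converge and for $\alpha_2$ to remain strictly positive.
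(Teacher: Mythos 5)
Your proposal follows the same route as the paper: truncate to $\vphi_{c_0\vepsilon}$ so that $\mathbf{1}_{\Omega_\vepsilon}\,\mu\le(dd^c\vphi_{c_0\vepsilon})^n$ via the inclusion $\Omega_\vepsilon\subset D_{c_0\vepsilon}$, prove a capacity-growth inequality by combining a comparison-principle step with Corollary~\ref{cor:vol-cap}, and close with Ko\l odziej's iteration lemma (this is exactly the paper's Lemma~\ref{lem:capacity-growth} followed by the reference to \cite[Theorem~1.1]{GKZ08}). Two small slips in your write-up are worth correcting: the capacity--mass inequality $t^n\,cap\bigl(U(s+t)\bigr)\le\mu\bigl(U(s)\bigr)$ is obtained by comparing $u$ with $v-s-t+tw$ for an \emph{arbitrary} competitor $w\in PSH(\Omega)$ with $0\le w\le 1$ (as in the paper's proof of Lemma~\ref{lem:capacity-growth}), not from the specific pair $v-s$ versus $(1-t)u+t\vphi_{c_0\vepsilon}$; and Corollary~\ref{cor:vol-cap} must then be applied to $U(s)$, not to $U(s+t)$, in order to produce the chained bound $t\,a(s+t)\le C\vepsilon^{-1}a(s)^{1+\tau}$ that you state.
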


\begin{proof} Without loss of generality we may assume that $ \sup_{\Omega} (v-u)>0$. Set \[s_0 := \inf_{\Omega} (u-v).\] We know that
for $0<s < |s_0|$,
\[
	U(s):=\{u< v + s_0 +s\} \subset \subset \Omega_\vepsilon.
\]
\begin{lem}\label{lem:capacity-growth}
Fix $\tau>0$. For every $0< s, t < \frac{|s_0|}{2}$. Then,
\[
	t^n cap(U(s)) \leq \frac{C(\tau)}{\vepsilon^{n}} \left[ cap(U(s+t)) \right]^{1+\tau}.
\]
\end{lem}
\begin{proof}[Proof of Lemma~\ref{lem:capacity-growth}]
Let $0 \leq w \leq 1$ be a plurisubharmonic function in $\Omega$. We have the following chain of inequalites.
\[\begin{aligned}
	t^n \int_{U(s)} (dd^cw)^n 
&= 		\int_{\{u< v+s_0+s\}} [dd^c (tw)]^n \\
&\leq		\int_{\{u<v+s_0 +s+tw\}} 	[dd^c (v + tw)]^n \\
&\leq		\int_{\{u<v+s_0 +s+tw\}} 	(dd^c u)^n,  \\
\end{aligned}\]
where we used the comparison principle \cite[Theorem~4.1]{BT82} for the last inequality.  Since $\{u<v+s_0 +s+tw\} \subset U(s+t)$ and $w$ is arbitrary, we get that 
\[
	t^n cap(U(s)) \leq \int_{U(s+t)} d\mu.
\]
If we denote  $\vepsilon' := c_0\vepsilon$, where $c_0$ is the constant in \eqref{eq:dist-bound}, then
$${\bf 1}_{D_{\vepsilon'}} \cdot d\mu \leq (dd^c\vphi_{\vepsilon'})^n$$ 
as two measures. 
Since $U(s+t) \subset \Omega_\vepsilon \subset D_{\vepsilon'}$, it follows that
\[\begin{aligned}
	\int_{U(s+t)} d\mu 
&\leq		 \int_{U(s+t)} (dd^c\vphi_{\vepsilon'})^n \\
&\leq		\frac{C(\tau)}{(c_0\vepsilon)^{n}} \left[ cap(U(s+t)) \right]^{1+\tau},
\end{aligned}\]
where the last inequality followed from Corollary~\ref{cor:vol-cap}. The proof of the lemma is complete.
\end{proof}
Now together with Lemma~\ref{lem:capacity-growth}, the rest of the proof of the proposition is the same as in \cite[Theorem~1.1]{GKZ08} (see also \cite[Theorem~3.11]{KN3}). 
\end{proof}

The following result is a variation of Lemma~2.7 in \cite{Ng17a} where we considered the H\"older continuity of a measure $\nu$ on $\cE_0'$. Though, the situation now is differrent as $\nu(\Omega)$ is no longer finite.

\begin{thm} \label{thm:l1-l1}
Let $u$ be the solution of the equation \eqref{eq:sol-B} and $\Omega_\vepsilon$ is defined by \eqref{eq:omega-epsilon}.
Let $v \in PSH \cap L^\infty(\Omega)$ be such that  that $v=u$ on $\Omega\setminus\Omega_\vepsilon$. Then, there exists $0<\alpha_3\leq 1$ such that
\[
	\int_\Omega |v-u| d\mu \leq \frac{C}{\vepsilon^{n+1}} \left(\int_\Omega |v-u| dV_{2n} \right)^{\alpha_3}.
\]
\end{thm}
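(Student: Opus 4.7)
The plan is to reduce $|v-u|$ to a difference of two bounded plurisubharmonic functions and then run an induction on the number of $dd^c\varphi_0$ factors, in the spirit of the proof of Lemma~\ref{lem:exp-decay}. Set $\tilde v := \max\{v,u\}$ and $\hat v := \min\{v,u\}$; both are bounded plurisubharmonic on $\Omega$, $\tilde v - \hat v = |v-u| \ge 0$, and since $v = u$ on $\Omega\setminus\Omega_\vepsilon$ we have $\tilde v = \hat v$ there, so the signed current $dd^c(\tilde v - \hat v)$ is supported in the compact set $\bar\Omega_\vepsilon \subset\subset \Omega$. Setting $\vepsilon' := c_0\vepsilon$, the inclusion $\Omega_\vepsilon \subset D_{\vepsilon'}$ from \eqref{eq:inclusion} and Lemma~\ref{lem:sub-sol-mass-est} allow me to replace $\mu$ on $\bar\Omega_\vepsilon$ by $(dd^c\varphi_0)^n$ with $\varphi_0 := \vphi_{\vepsilon'}$, for which $\|\varphi_0\|_\infty \le C/\vepsilon$ and $\|\varphi_0\|_{C^{0,\alpha}(\bar\Omega)} \le C/\vepsilon$, both inherited from the Lipschitz term $A\rho/\vepsilon'$ in its definition.

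Next I will prove by induction on $0 \le k \le n$ the estimate
\[
M_k := \int_\Omega (\tilde v - \hat v)(dd^c\varphi_0)^k \wed \beta^{n-k} \le \frac{C}{\vepsilon^{b_k}} \|v-u\|_1^{\alpha_k},
\]
with $b_0 = 0$, $\alpha_0 = 1$ and recursion $b_{k+1} = \max\{b_k+1,\,k+2\}$, $\alpha_{k+1} = \alpha\alpha_k/(\alpha+2)$. The base case is $M_0 = c_n \|v-u\|_1$, and unfolding the recursion gives $b_k = k+1$ for $k\ge 1$ and $\alpha_n = (\alpha/(\alpha+2))^n$, so the case $k = n$ yields the theorem with $\alpha_3 := \alpha_n$.

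For the inductive step, after Hölder-extending $\vphi$ (hence $\varphi_0$) to a neighbourhood of $\bar\Omega$, I mollify by $\varphi_0^t := \varphi_0 * \chi_t$ and split $M_{k+1} \le I_1 + I_2$ as in \eqref{eq:i1+i2}. The pointwise bound $dd^c\varphi_0^t \le C\|\varphi_0\|_\infty t^{-2}\beta \le C(\vepsilon t^2)^{-1}\beta$ gives $I_1 \le (C/\vepsilon t^2) M_k$. For $I_2$, the compact support of $\tilde v - \hat v$ inside $\Omega$ legitimizes an integration by parts producing
\[
\int_\Omega (\varphi_0^t - \varphi_0)\, dd^c(\tilde v - \hat v) \wed (dd^c\varphi_0)^k \wed \beta^{n-k-1};
\]
bounding the total variation of the signed measure $dd^c(\tilde v - \hat v) \wed (dd^c\varphi_0)^k \wed \beta^{n-k-1}$ (supported in $\bar\Omega_\vepsilon$ since $\tilde v = \hat v$ off $\Omega_\vepsilon$) by $(dd^c\tilde v + dd^c\hat v) \wed (dd^c\varphi_0)^k \wed \beta^{n-k-1}$ and using $|\varphi_0^t - \varphi_0| \le C t^\alpha/\vepsilon$ reduces the problem to controlling $\int_{\bar\Omega_\vepsilon} dd^c\tilde v \wed (dd^c\varphi_0)^k \wed \beta^{n-k-1}$. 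This last integral I handle by the cut-off trick of Lemma~\ref{lem:mass-est} combined with Lemma~\ref{lem:blocki} (applied to the pair $\rho \le \max\{\rho,-c_0\vepsilon/2\}$ with $\tilde v$ as the extra factor) and Remark~\ref{rmk:mass-control}, obtaining an upper bound of order $C/\vepsilon^{k+1}$ and hence $I_2 \le Ct^\alpha/\vepsilon^{k+2}$. Choosing $t = \|v-u\|_1^{\alpha_k/(\alpha+2)}$ to balance the two terms yields the required bound on $M_{k+1}$, in the non-trivial regime where $\|v-u\|_1$ is small enough for the mollification to be defined.

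The main technical obstacle is justifying the integration by parts for the signed current $dd^c(\tilde v - \hat v)$, which is not itself positive; I will handle this via the Bedford--Taylor calculus for bounded plurisubharmonic functions together with the compact support of $\tilde v - \hat v$ inside $\Omega$. The bookkeeping of the $1/\vepsilon$ factors must also be done with care; the extra power relative to Corollary~\ref{cor:vol-cap}'s $1/\vepsilon^n$ arises from $\|\varphi_0\|_\infty \lesssim 1/\vepsilon$ entering the $I_1$-bound at the very first step of the induction, propagating the $+1$ through the recursion.
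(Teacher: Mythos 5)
Your strategy --- mollify $\vphi_\vepsilon$, split into $I_1+I_2$, bound $I_1$ by the second-derivative estimate of size $1/(\vepsilon t^2)$, integrate by parts in $I_2$ and use the H\"older bound of size $t^\alpha/\vepsilon$ together with the cut-off argument of Lemma~\ref{lem:mass-est}, then balance $t$ to close the induction --- is the same one the paper uses, and your bookkeeping of the $\vepsilon$-powers ($b_n = n+1$) matches the paper's $C/\vepsilon^{n+1}$.

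There is, however, one genuine slip: $\hat v := \min\{v,u\}$ is \emph{not} plurisubharmonic (only the maximum of two PSH functions is again PSH), so $dd^c\hat v$ is not a positive current, and the assertion that $dd^c(\tilde v - \hat v)\wed(dd^c\varphi_0)^k\wed\beta^{n-k-1}$ has total variation dominated by $(dd^c\tilde v + dd^c\hat v)\wed(dd^c\varphi_0)^k\wed\beta^{n-k-1}$ is not well-posed as written. The repair is easy and costs nothing in the numerology. Since $\tilde v + \hat v = v + u$, one has $dd^c(\tilde v - \hat v) = 2\,dd^c\tilde v - dd^c v - dd^c u$, so the total variation paired against the positive current $(dd^c\varphi_0)^k\wed\beta^{n-k-1}$ is dominated by $(2\,dd^c\tilde v + dd^c v + dd^c u)\wed(dd^c\varphi_0)^k\wed\beta^{n-k-1}$, a sum of three positive wedge products of bounded PSH functions, to which the cut-off argument of Lemma~\ref{lem:mass-est} applies term by term giving the required $C/\vepsilon^{k+1}$. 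The paper sidesteps this entirely by first proving the one-sided inequality for $v\geq u$ (where $|dd^c(v-u)| \leq dd^c v + dd^c u$, both positive) and then applying it twice via the decomposition $|v-u| = (\max\{v,u\}-u) + (\max\{v,u\}-v)$; either route works. Finally, you should, as the paper does, dispose of the trivial regime $\|v-u\|_1 \gtrsim 1$ by the direct estimate $\int_\Omega |v-u|\,(dd^c\vphi_\vepsilon)^n \leq C\|u\|_\infty(1+\|\vphi\|_\infty)^n/\vepsilon^n$, so that $t = \|v-u\|_1^{\alpha_k/(\alpha+2)}$ is small enough for the mollification $\vphi_\vepsilon*\chi_t$ to be defined on the extension neighborhood of $\bar\Omega$.
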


\begin{proof} This is a variation of the inequality \eqref{eq:holder-cegrell-class} with 
\[
	S_{k,\vepsilon} := (dd^c\vphi_\vepsilon)^k \wed \beta^{n-k},
\]
where $\vphi_\vepsilon= \max\{\vphi - \vepsilon, A\rho/\vepsilon\}$ and  $0\leq k \leq n$ is integer. Since $\mu \leq S_{n,\vepsilon}$ on $\Omega_\vepsilon$, it is enough to show that there is $0<\tau \leq 1$ satisfying
\[\label{eq:thm-l1-l1-2}
	\int_\Omega (v-u) S_{n,\vepsilon}  \leq \frac{C}{\vepsilon^{n+1}} \|v-u\|_1^{\tau}.
\]
for $v\geq u$ on $\Omega$. (In the general case we use the  identity
\[\notag
	|v-u| = (\max\{v,u\} -u) + (\max\{v,u\} -v)
\]
and apply twice the inequality \eqref{eq:thm-l1-l1-2} to get the theorem.)

Now we can repeat the inductive arguments of the proof of \eqref{eq:holder-cegrell-class} with $v, u$ and $\vphi_\vepsilon$ in the places of $v_s, v$ and $\vphi$, respectively. However, there are differences as follows. First, $v, u$ are no longer in $\cE_0'$. Second, let us extend $\vphi$ as in proof of Lemma~\ref{lem:exp-decay}, then $\vphi_\vepsilon= \max\{\vphi - \vepsilon, A\rho/\vepsilon\}$ is also defined on the neighborhood $U$ of $\bar\Omega$, and
$$	
\|\vphi_\vepsilon\|_{C^{0,\alpha}(U)} \leq \frac{C}{\vepsilon}.
$$
Taking into account above differences, to pass from the  $k$-th step to the step number $(k+1)$ we need the following inequality,
corresponding to   \eqref{eq:i1+i2},  (with notation $S_\vepsilon:= (dd^c \vphi_\vepsilon)^{k} \wed \beta^{n-k-1}$)
\[\label{eq:ind-est}\begin{aligned}
	\int_\Omega (v-u) dd^c \vphi_\vepsilon \wed S_\vepsilon 
&\leq 	\left| \int_\Omega (v-u) dd^c \vphi_\vepsilon * \chi_t \wed S_\vepsilon \right| \\
&\quad	+\left| \int_\Omega (v-u) dd^c (\vphi_\vepsilon * \chi_t -\vphi_\vepsilon) \wed S_\vepsilon\right| \\
&=: I_{1,\vepsilon}+ I_{2,\vepsilon}.
\end{aligned}\]
Since \[\label{eq:observe-holder-b}
	\left|\frac{\d^2 \vphi_\vepsilon * \chi_t}{\d z_j \d \bar z_k} (z)\right| \leq \frac{C \|\vphi\|_\infty}{\vepsilon t^2},
\]
and the induction hypothesis at the step $k$-th, there exists $0<\tau_k \leq 1$ such that
$$
\int_\Omega (v-u) S_\vepsilon \wed \beta \leq \frac{C}{\vepsilon^{k+1}} \|v-u\|_1^{\tau_k},
$$
we conclude that
\[\label{eq:ind-est-a}
\begin{aligned} I_{1,\vepsilon}
&\leq 	\frac{C\|\vphi\|_\infty}{\vepsilon\, t^2} \int_\Omega (v-u) S_\vepsilon \wed \beta \\
&\leq 	\frac{C\|\vphi\|_\infty}{\vepsilon^{k+2}\, t^2} \|v-u\|_1^{\tau_k}.
\end{aligned}\]
Similar to \eqref{eq:i2-int}, by integration by parts, $u=v$ on $\Omega\setminus \Omega_\vepsilon$, and 
\[\notag \label{eq:observe-holder-a}
\begin{aligned}
	\left|\vphi_\vepsilon * \chi_t (z) - \vphi_\vepsilon(z) \right|
&	\leq \frac{Ct^{\alpha}}{\vepsilon},
\end{aligned}\]
it follows that 
\[\label{eq:i2e-a}\begin{aligned}
	I_{2,\vepsilon} 
&\leq		\frac{C t^\alpha}{\vepsilon} \int_{\Omega_\vepsilon} (dd^c v+ dd^cu) \wed S_\vepsilon. \\
\end{aligned}\]
At this point as $u, v$ do not belong to $\cE_0'$ we need to use a different argument to bound $I_{2,\vepsilon}$.  Namely,  similarly to Lemma~\ref{lem:mass-est}, we have
\[\label{eq:i2e-b}
\int_{\Omega_\vepsilon} (dd^c u + dd^c v) \wed S_\vepsilon  \leq  \frac{C\|u+v\|_\infty (1+\|\vphi\|_\infty)^k}{\vepsilon^{k+1}} .
\]
Indeed, we first have 
\[\notag\begin{aligned}
&\int_{\Omega_\vepsilon} dd^c (u+v) \wed (dd^c \vphi_\vepsilon)^k\wed \beta^{n-k-1}  \\
&\leq		\frac{2}{\vepsilon'}\int_{\Omega} \left(\max\{\rho, - \vepsilon'/2\} - \rho\right) \wed dd^c (u+v) \wed (dd^c \vphi_\vepsilon)^k\wed \beta^{n-k-1} \\
&\leq \frac{C}{\vepsilon} \|u+v\|_\infty  \int_\Omega (dd^c\rho)\wed (dd^c \vphi_\vepsilon)^k\wed \beta^{n-k-1},
\end{aligned}\]
where $\vepsilon' = c_0\vepsilon$ with $c_0$ defined by \eqref{eq:dist-bound}. 
The desired inequality \eqref{eq:i2e-b} follows from Remark~\ref{rmk:mass-control}. 
Now, combining \eqref{eq:i2e-a} and \eqref{eq:i2e-b}  we get that
\[\label{eq:ind-est-b}
I_{2,\vepsilon}  \leq		\frac{C t^\alpha}{\vepsilon^{k+2}}.
\]
Next, it is easy to see (from Lemma~\ref{lem:sub-sol-mass-est}) that 
\[\notag
	\int_{\Omega} (v-u) S_n \leq \frac{C \|u\|_\infty (1+ \|\vphi\|_\infty)^n}{\vepsilon^{n}}. 
\]
Therefore, we can assume that $0< \|v-u\|_1 < 0.01$.
Thanks to \eqref{eq:ind-est-a} and \eqref{eq:ind-est-b} we have
\[\notag
	\int_\Omega (v-u) dd^c \vphi_\vepsilon\wed S_\vepsilon \leq \frac{C}{\vepsilon^{k+2} \, t^2} \|v-u\|_1^{\tau_k} + \frac{C t^\alpha}{\vepsilon^{k+2}}.
\]
If we choose 
$
	t = \|v-u\|_1^\frac{\tau_k}{3}, 
	\quad \tau_{k+1} = \frac{\alpha\tau_k}{3},
$
then
\[\notag
	\int_\Omega (v-u) S_\vepsilon \wed dd^c \vphi_\vepsilon \leq \frac{C}{\vepsilon^{k+2}} \|v-u\|_1^{\tau_{k+1}}.
\]
Thus, the induction argument is completed, and the theorem follows. 
\end{proof}

The last ingredient to prove Theorem~B was proved first in  \cite{BKPZ16} (see also \cite[Lemma~2.12]{Ng17a}). Here, the estimate is sharper and the proof is simpler too.

\begin{lem}  \label{lem:lap-est}
For $\delta>0$ small we have
\[
	\int_{\Omega_\delta} |\hat u_\delta -u | dV_{2n} \leq C \delta.
\]
\end{lem}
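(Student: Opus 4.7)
The plan rests on the sub-mean-value property together with Fubini. Since $u \in PSH(\Omega) \cap C^0(\bar\Omega)$ is subharmonic, one has $u \leq \hat u_\delta$ on $\Omega_\delta$, so $|\hat u_\delta - u| = \hat u_\delta - u$ and it suffices to estimate this nonnegative quantity from above after integration.

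Unwinding the definition of $\hat u_\delta$ and swapping the order of integration gives
\[
\int_{\Omega_\delta} \hat u_\delta\, dV_{2n} = \frac{1}{\sigma_{2n}\delta^{2n}} \int_{|\zeta| \leq \delta} \int_{\Omega_\delta + \zeta} u(w)\, dV_{2n}(w)\, dV_{2n}(\zeta),
\]
where for $|\zeta| \leq \delta$ the translate $\Omega_\delta + \zeta$ is contained in $\Omega$ since $\mathrm{dist}(\cdot, \d\Omega)$ varies by at most $|\zeta|$ under translation. Subtracting $\int_{\Omega_\delta} u\, dV_{2n}$ then produces
\[
\int_{\Omega_\delta} (\hat u_\delta - u)\, dV_{2n} = \frac{1}{\sigma_{2n}\delta^{2n}} \int_{|\zeta| \leq \delta} \int_{\Omega} u \cdot \bigl(\mathbf{1}_{\Omega_\delta + \zeta} - \mathbf{1}_{\Omega_\delta}\bigr)\, dV_{2n}\, dV_{2n}(\zeta),
\]
and the inner integrand is supported on the symmetric difference $(\Omega_\delta + \zeta)\,\triangle\,\Omega_\delta$.

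The geometric input is that this symmetric difference is contained in the strip $\{w \in \Omega : \delta - |\zeta| \leq \mathrm{dist}(w, \d\Omega) \leq \delta + |\zeta|\}$. Since $\Omega$ is strictly pseudoconvex with $C^2$ defining function $\rho$, Hopf's lemma \eqref{eq:dist-bound} combined with the coarea formula (applied to $\mathrm{dist}(\cdot,\d\Omega)$, or equivalently to $\rho$) yields a uniform bound
\[
\bigl|(\Omega_\delta + \zeta)\,\triangle\,\Omega_\delta\bigr| \leq C_\Omega |\zeta|
\]
for all $\delta < \delta_0$ small and $|\zeta| \leq \delta$, with $C_\Omega$ independent of $\delta$. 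Since $u$ is bounded on $\bar\Omega$ (by \eqref{eq:boundary-hol} and the continuous boundary data), the inner spatial integral is bounded in absolute value by $C_\Omega \|u\|_\infty |\zeta|$, and consequently
\[
\int_{\Omega_\delta} (\hat u_\delta - u)\, dV_{2n} \leq \frac{C_\Omega \|u\|_\infty}{\sigma_{2n}\delta^{2n}} \int_{|\zeta| \leq \delta} |\zeta|\, dV_{2n}(\zeta) \leq C \delta,
\]
as required.

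The only mildly technical point is the uniform-in-$\delta$ strip estimate, which is elementary given the $C^2$-regularity of $\d\Omega$ and the fact that the distance function has nonvanishing gradient in a uniform tubular neighborhood of $\d\Omega$. No pluripotential machinery is involved, which explains the author's comment that the proof is simpler and the bound sharper than the corresponding estimate in \cite{BKPZ16}.
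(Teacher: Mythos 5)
Your proof is correct, and it takes a genuinely different route from the paper's. The paper first applies the Jensen-type estimate from \cite[Lemma~4.3]{GKZ08},
\[
\int_{\Omega_{2\delta}} |\hat u_\delta - u|\,dV_{2n} \leq C\delta^2 \int_{\Omega_\delta} \Delta u,
\]
and then invokes Lemma~\ref{lem:mass-est} (with $k=1$) to bound $\int_{\Omega_\delta}\Delta u \leq C/\delta$; the boundary collar $\Omega_\delta\setminus\Omega_{2\delta}$ is then handled by the crude volume bound $|\Omega_\delta\setminus\Omega_{2\delta}|\leq C\delta$. Your argument bypasses the Laplacian mass estimate entirely: after using subharmonicity only to justify the sign $\hat u_\delta \geq u$, you rewrite $\int_{\Omega_\delta}(\hat u_\delta-u)$ via Fubini as an average of the signed integrals $\int u\,(\mathbf 1_{\Omega_\delta+\zeta}-\mathbf 1_{\Omega_\delta})$, and then the whole estimate reduces to the geometric fact that $|(\Omega_\delta+\zeta)\,\triangle\,\Omega_\delta|\leq C_\Omega|\zeta|$ uniformly in small $\delta$, which follows from the coarea formula for the distance function on a $C^2$ domain (the appeal to Hopf's lemma is actually superfluous here: the inclusion of the symmetric difference in the strip $\{\delta-|\zeta|<\mathrm{dist}(\cdot,\d\Omega)\leq\delta+|\zeta|\}$ is a direct $1$-Lipschitz argument for the distance function and needs no comparison with $\rho$). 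Your approach is more elementary and self-contained, requiring only $u\in L^\infty$ and subharmonicity; the paper's route has the advantage of reusing Lemma~\ref{lem:mass-est}, which is already set up for the later arguments, and of making the dependence on $\Delta u$ explicit, but both yield the same linear bound $C\delta$ with the same quality of constants.
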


\begin{proof} First, we know from the classical Jensen formula (see e.g. \cite[Lemma 4.3]{GKZ08}) that
\[
	\int_{\Omega_{2\delta}} |\hat u_\delta -u| \leq C \delta^2 \int_{\Omega_\delta} \Delta u(z).
\]
Again, it follows from Lemma~\ref{lem:mass-est} applied for $k =1$ and $\delta = \vepsilon$, that
\[
	\int_{\Omega_\delta} \Delta u(z) \leq \frac{C}{\delta}.
\]
Therefore,
\[
	\int_{\Omega_\delta} |\hat u_\delta -u| dV_{2n} \leq \int_{\Omega_{2\delta}} |\hat u_\delta -u| dV_{2n} + \|u\|_\infty \int_{\Omega_\delta\setminus \Omega_{2\delta}} dV_{2n} \leq C\delta.
\]
This is the required inequality.
\end{proof}

We are ready to prove the H\"older continuity of the solution.

\begin{proof}[End of Proof of Theorem~B] Let us fix $\delta$ such that $0< \delta < \delta_0$ small and let $\vepsilon$ be such that $ \delta \leq \vepsilon < \delta_{0}$ which is to be determined later. 
Thanks to  Lemma~\ref{lem:boundary-holder} and $\hat u_\delta \leq u_\delta$ we have $\hat u_\delta - C\vepsilon^\alpha \leq u$ on $\d\Omega_\vepsilon$. Therefore, the function
\[ \label{eq:extend-solution}
\tilde u := 
\begin{cases} 
	\max\{\hat u_\delta - C \vepsilon^\alpha, u\} \quad 
	&\mbox{ on } \Omega_{\vepsilon},\\
	u  \quad &\mbox{ on } \Omega\setminus \Omega_{\vepsilon},
\end{cases}
\]
belongs to $PSH(\Omega)\cap C^0(\bar\Omega)$. Notice that $\tilde u \geq u$ in $\Omega$, and
\[
	 \tilde u = u \quad\mbox{ on } \Omega\setminus \Omega_\vepsilon.
\]
Again, by the second part of Lemma~\ref{lem:boundary-holder}  we have that
\[\label{eq:holder-eq1}
\begin{aligned}
	\sup_{\Omega_\delta}(\hat u_\delta -u) 
&\leq 	\sup_{\Omega_\vepsilon} (\hat u_\delta -u) + C \vepsilon^\alpha \\
&\leq		\sup_{\Omega} (\tilde u - u) + C \vepsilon^\alpha + C\vepsilon^\alpha.
\end{aligned}\]
By the stability estimate (Proposition~\ref{prop:stability}) there exists $0<\alpha_2 \leq 1$ such that
\[\label{eq:holder-eq2}\begin{aligned}
	\sup_{\Omega} (\tilde u - u) 
&\leq 	\frac{C}{\vepsilon^{n}} \left(\int_{\Omega} \max\{\tilde u - u,0\} d\mu\right)^{\alpha_2} \\
&\leq		\frac{C}{\vepsilon^{n}} \left(\int_{\Omega} |\tilde u - u|  d\mu\right)^{\alpha_2},	
\end{aligned}\]
where we used the fact that $\tilde u = u$ outside $\Omega_\vepsilon$.
Using Theorem~\ref{thm:l1-l1}, there is $0<\alpha_3\leq 1$ such that
\[\label{eq:holder-eq3}
\begin{aligned}
	\sup_{\Omega} (\tilde u -u) 
&\leq 	\frac{C}{\vepsilon^{n+(n+1)\alpha_2}}  \left(\int_{\Omega} |\tilde u - u|  dV_{2n}\right)^{\alpha_2\alpha_3} \\
&\leq		\frac{C}{\vepsilon^{2n+1}}  \left(\int_{\Omega_\delta} |\hat u_\delta - u|  dV_{2n}\right)^{\alpha_2\alpha_3}, 
\end{aligned}\]
where we used $0\leq \tilde u - u \leq {\bf 1}_{\Omega_\vepsilon} \cdot (\hat u_\delta -u)$ and $\Omega_\vepsilon \subset \Omega_{\delta}$ for the second inequality. It follows from \eqref{eq:holder-eq1}, \eqref{eq:holder-eq3} and Lemma~\ref{lem:lap-est} that
\[
	\sup_{\Omega_\delta} (\hat u_\delta - u) \leq C \vepsilon^\alpha + \frac{C \delta^{\alpha_2\alpha_3}}{\vepsilon^{2n+1}}.
\]
Now, we choose $\alpha_4 = \alpha\alpha_2\alpha_3/(2n+1+\alpha)$ and 
$$ \vepsilon = \delta^{\frac{\alpha_2\alpha_3 }{2n+1 + \alpha}}.
$$
Then, 
$
	\sup_{\Omega_\delta} (\hat u_\delta - u) \leq  C \delta^{\alpha_4}.
$
Finally, thanks to \cite[Lemma~4.2]{GKZ08} we infer that 
$
\sup_{\Omega_\delta} (u_\delta - u) \leq C \delta^{\alpha_4}.
$
The proof of the theorem is finished.
\end{proof}

\section{proof of Corollary~C}

Let $\mu\in \cM(\vphi,\Omega)$ and $0\leq f\in L^p(\Omega, d\mu)$ with $p>1$. We wish to show that there exists $\tilde\vphi \in PSH(\Omega) \cap C^{0,\tilde\alpha}(\bar\Omega)$ with $0<\tilde\alpha \leq 1$ such that
\[
	f d\mu \in \cM(\tilde\vphi,\Omega).
\]
The proof of the corollary is similar to the one of Theorem~B with the aid of following two lemmas. 

\begin{lem}  Fix a constant $\tau>0$. Then, there exists a uniform constant $C(\tau)$ such that for every compact set $K\subset \Omega$,
\[
	\int_K fd\mu \leq C(\tau) \left[cap(K)\right]^{1+\tau}.
\]
\end{lem}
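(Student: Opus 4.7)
The plan is simple: apply H\"older's inequality to separate $f$ from $d\mu$, then invoke Proposition~\ref{prop:vol-cap} to bound $\mu(K)$, and finally exploit the exponential decay in that bound to absorb any polynomial loss. Let $q = p/(p-1)$ denote the conjugate exponent of $p$. For a compact set $K \subset \Omega$, H\"older's inequality gives
\[
\int_K f\, d\mu \leq \left(\int_K f^p\, d\mu\right)^{1/p} \mu(K)^{1/q} \leq \|f\|_{L^p(\mu)}\, \mu(K)^{1/q}.
\]
Since $\mu \in \cM(\vphi, \Omega)$, Proposition~\ref{prop:vol-cap} applies to $\mu$, and after raising to the power $1/q$ we obtain
\[
\mu(K)^{1/q} \leq C^{1/q}\, [cap(K)]^{1/q}\, \exp\!\left(\frac{-\alpha_0}{q\,[cap(K)]^{1/n}}\right).
\]

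It remains to show that, for any fixed $\tau > 0$, the last quantity is dominated by a constant multiple of $[cap(K)]^{1+\tau}$. This reduces to the elementary observation that the function
\[
x \longmapsto x^{1/q - 1 - \tau} \exp\!\left(\frac{-\alpha_0}{q\, x^{1/n}}\right)
\]
is bounded on $(0, cap(\Omega)]$. Indeed, $cap(K) \leq cap(\Omega) < +\infty$ is uniformly bounded above, and as $x \to 0^+$ the exponential decay dominates the polynomial blow-up coming from the negative exponent $1/q - 1 - \tau < 0$ (here the hypothesis $p > 1$ is precisely what ensures $1/q > 0$). Multiplying through by $\|f\|_{L^p(\mu)}$ yields the claim.

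The argument is essentially routine; there is no serious obstacle. The real content lies in Proposition~\ref{prop:vol-cap}, whose exponential factor $\exp(-\alpha_0/[cap(K)]^{1/n})$ is precisely the quantitative strengthening (over \cite{Ng17a}) that makes \emph{any} fixed power $1+\tau$ absorbable into the $L^p$ weight. This volume-capacity inequality for $f\,d\mu$ is exactly the input needed in the proof of Corollary~C to rerun the argument of Theorem~B and produce a H\"older continuous potential $\tilde\vphi$ for $f\,d\mu$.
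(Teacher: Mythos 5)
Your argument is correct and follows the same route as the paper: H\"older's inequality with exponents $p$ and $q=p/(p-1)$, followed by Proposition~\ref{prop:vol-cap} and the elementary observation that the exponential factor absorbs the polynomial deficit $[cap(K)]^{(p-1)/p - 1 - \tau}$. One small slip: the assertion $cap(\Omega,\Omega)<+\infty$ is false for the Bedford--Taylor capacity (it blows up as compact sets exhaust $\Omega$), but this is harmless since the function $x\mapsto x^{1/q-1-\tau}\exp\bigl(-\alpha_0/(q\,x^{1/n})\bigr)$ is in fact bounded on all of $(0,\infty)$ --- it tends to $0$ as $x\to 0^+$ by the exponential, and also as $x\to\infty$ because the exponent $1/q-1-\tau$ is negative --- which is exactly the elementary inequality the paper invokes.
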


\begin{proof}  H\"older's inequality and Proposition~\ref{prop:vol-cap} give us 
\[\begin{aligned}
\int_K fd\mu 
&\leq 	\|f\|_{L^p(\Omega, d\mu)} \left[\mu(K)\right]^\frac{p-1}{p} \\
&\leq 	C \left[cap(K) \cdot \exp\left(\frac{-\alpha_0}{[cap(K)]^\frac{1}{n}}\right)\right]^\frac{p-1}{p}.
\end{aligned}
\]
Let $0< a,b,c <1$ be fixed. The following elementary inequality holds for $x>0$,
$$x^a \exp\left(\frac{-c}{x^b}\right) \leq C(\tau) x^{1+\tau},$$
where $C(\tau) = C(\tau,a,b,c)$ depends only on $\tau, a, b, c$. Thus, the desired inequality follows.
\end{proof}
Thanks to the lemma and \cite[Theorem~5.9]{ko05} we can solve the Monge-Amp\`ere equation 
\[
	u\in PSH(\Omega) \cap C^0(\bar\Omega), \quad
	(dd^cu)^n = fd\mu, \quad u_{|_{\d\Omega}} =0.
\]
Moreover, the above lemma will enable us to have the stability estimate (Proposition~\ref{prop:stability}). 
The next lemma is also a consequence of the generalized H\"older inequality which was proved in \cite[Corollary~2.14]{Ng17a}. 

\begin{lem} \label{lem:lp-property}
Let $v \in PSH(\Omega) \cap C^0(\bar\Omega)$ be such that $v\geq u$ in $\Omega$ and $v= u$ near $\d\Omega$. Then, there exist uniform constants $C>0$ and $0< \tilde\alpha_3 <1$ such that 
\[
	\int_\Omega (v-u) f d\mu \leq C \|v-u\|_{L^1(d\mu)}^{\tilde\alpha_3}.
\]
\end{lem}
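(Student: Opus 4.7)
The plan is a clean application of H\"older's inequality plus a one-line interpolation absorbing the boundedness of $v-u$; this is exactly the content of the generalized H\"older inequality of \cite[Corollary~2.14]{Ng17a}. Let $q = p/(p-1)$ be the conjugate exponent of $p$. Since $f \in L^p(\Omega, d\mu)$, H\"older applied to the product $(v-u)\cdot f$ gives
\[
\int_\Omega (v-u) f \, d\mu \leq \Bigl(\int_\Omega (v-u)^q \, d\mu\Bigr)^{1/q} \|f\|_{L^p(\Omega, d\mu)}.
\]

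The remaining step is to bound the $L^q(d\mu)$ norm of $v-u$ by a power of its $L^1(d\mu)$ norm. Using $v\geq u$ together with the pointwise inequality $(v-u)^q = (v-u)^{q-1}(v-u) \leq \|v-u\|_\infty^{q-1}(v-u)$, one integrates and takes the $1/q$ power to get
\[
\Bigl(\int_\Omega (v-u)^q \, d\mu\Bigr)^{1/q} \leq \|v-u\|_\infty^{1/p}\, \|v-u\|_{L^1(d\mu)}^{(p-1)/p}.
\]
Combining the two displays yields the stated inequality with exponent $\tilde\alpha_3 := (p-1)/p \in (0,1)$ and constant $C := \|v-u\|_\infty^{1/p}\|f\|_{L^p(\Omega,d\mu)}$.

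For the constant $C$ to be uniform in $v$, it suffices to observe that $\|v-u\|_\infty$ is controlled independently of $v$ for the competitors that actually enter the proof of Corollary~C: since $v=u$ near $\d\Omega$ and $u\in C^0(\bar\Omega)$, in the gluing construction analogous to \eqref{eq:extend-solution} one has $\|v-u\|_\infty \leq C(\|u\|_\infty)$. I expect no real obstacle here; unlike Theorem~\ref{thm:l1-l1}, one needs neither an induction on the bidegree of currents nor the convolution--regularisation of $\vphi$, because the $L^p$ hypothesis on $f$ supplies the gain in exponent directly through H\"older. The only point to be mildly careful about is the dependence of $C$ on $\|v-u\|_\infty$, and this is handled exactly as above.
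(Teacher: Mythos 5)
Your proof is correct: ordinary H\"older's inequality against $f \in L^p(\Omega,d\mu)$ followed by the interpolation $(v-u)^q \leq \|v-u\|_\infty^{q-1}(v-u)$ yields exactly the stated bound with $\tilde\alpha_3 = (p-1)/p$, and all the integrals involved are finite because $v=u$ near $\d\Omega$ while $\mu$, being dominated by $(dd^c\vphi)^n$, is finite on compact subsets. The only point worth tightening is your hedge on the uniformity of $\|v-u\|_\infty$, which you justify by appealing to the specific gluing construction in Corollary~C; in fact it holds for every admissible $v$ in the lemma's hypotheses. Since $v$ is plurisubharmonic, continuous on $\bar\Omega$, and coincides with $u$ near $\d\Omega$, the maximum principle gives $\sup_\Omega v \leq \max_{\d\Omega} u$, hence $\|v-u\|_\infty \leq 2\|u\|_\infty$ uniformly in $v$. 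The paper itself does not write out a proof and simply cites the generalized H\"older inequality of \cite[Corollary~2.14]{Ng17a}; your elementary derivation is in the same spirit and works directly here because the $L^p$ hypothesis on $f$ already supplies the gain in exponent, with no need for the capacity machinery or the convolution/induction scheme used in Theorem~\ref{thm:l1-l1}.
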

Next, we use the extendability assumption of $\vphi$ to get the one similar to Lemma~\ref{lem:boundary-holder} in the current setting. Namely, let $\tilde\Omega$ be a striclty pseudoconvex neighborhood of $\bar\Omega$ such that $\vphi \in PSH(\tilde\Omega)$ and H\"older continuous on the closure of $\tilde\Omega$. Thanks to the results in \cite{Ng17a} there exists  $v \in PSH(\tilde\Omega)$ and H\"older continuous in $\tilde\Omega$ satisfying
\[\notag
	(dd^c v)^n = {\bf 1}_{\Omega} f d \mu \quad \mbox{in } \tilde\Omega, \quad
	v = 0 \quad \mbox{on } \d\tilde\Omega.
\]
Consider $h$ to be the maximal pluriharmonic extension into $\Omega$ of $(- v)_{|_{\d\Omega}}$  which is H\"older continuous on $\d\Omega$. So is  $h$ on $\bar\Omega$. Then, by the comparison principle,
\[\notag
	v + h \leq u \leq 0 \quad \mbox{ on } \bar\Omega.
\]
From this we easily deduce the desired estimate near boundary for $u$.

Now the rest of the proof goes exactly as in the proof of Theorem~B. Namely, the inequality \eqref{eq:holder-eq2} holds for the measure $fd\mu$, next use Lemma~\ref{lem:lp-property} and Theorem~\ref{thm:l1-l1} to get the inequality \eqref{eq:holder-eq3}. Then we get the H\"older continuity of $u$. Notice that the H\"older exponent is worse by a factor $\tilde\alpha_3$. Thus, $fd\mu\in \cM(u,\Omega)$.

\end{document}